\begin{document}
\renewcommand\baselinestretch{1.3}
\title[On A Class of Lifting Modules]{On A Class of Lifting Modules}

%%%AUTHOR ONE%%%%%%%%%%%
\author{  Hatice Inankil}
\address{Department of Mathematics, Ankara University, 06100 Ankara, Turkey}
\email{hatinankil@gmail.com}

%%%AUTHOR TWO%%%%%%%%%%%
\author{Sait Hal\i c\i o\u glu}
\address{Department of Mathematics, Ankara University, 06100 Ankara, Turkey}
\email{halici@science.ankara.edu.tr}

%%%AUTHOR THREE%%%%%%%%%
\author{Abdullah Harmanci}
\address{Department of Mathematics, Hacettepe University, 06550 Ankara, Turkey}
\email{harmanci@hacettepe.edu.tr}
\date{}
\newtheorem{thm}{Theorem}[section]
\newtheorem{lem}[thm]{Lemma}
\newtheorem{prop}[thm]{Proposition}
\newtheorem{cor}[thm]{Corollary}
\newtheorem{df}[thm]{Definition}
\newtheorem{nota}{Notation}
\newtheorem{note}[thm]{Remark}
\newtheorem{ex}[thm]{Example}
\newtheorem{exs}[thm]{Examples}

\subjclass[2000]{16U80} \keywords{lifting modules,
$\delta$-lifting modules,  semiperfect modules,
$\delta$-semiperfect modules}

\begin{abstract}
In this paper, we introduce principally $\delta$-lifting modules
which are analogous to  $\delta$-lifting modules and principally
$\delta$-semiperfect modules as a generalization of
$\delta$-semiperfect modules and investigate their properties.
\end{abstract}
\maketitle
\section{Introduction}
Throughout this paper all rings have an identity, all modules
considered are unital right modules.   Let $M$ be a module and
$N,P$ be submodules of $M$. We call $P$ {\it a supplement} of $N$
in $M$ if $M=P+N$ and $P\cap N$ is small in $P$. A module $M$ is
called {\it supplemented} if every submodule of $M$ has a
supplement in $M$. A module $M$ is called {\it lifting} if, for
all $N\leq M$, there exists a decomposition $M=A\oplus B$ such
that  $A\leq N$ and $N\cap B$ is small in $M$. Supplemented and
lifting modules have been discussed by several authors (see
\cite{CCNW,  MM, Os}) and these modules are useful in
characterizing semiperfect and right perfect rings (see \cite{MM,
Wi}).

In this note,  we study and investigate  principally $\delta$-lifting modules and principally $\delta$-semiperfect modules.
 A module $M$  is called {\it principally $\delta$-lifting} if for each cyclic submodule has the
 {\it  $\delta$-lifting property}, i.e.,  for each $m \in M$, $M$ has a decomposition
$M=A\oplus B$ with $A\leq mR$ and $mR\cap B$ is $\delta$-small in
$B$, where $B$ is called {\it a $\delta$-supplement} of $mR$.  A module $M$ is called  {\it principally
$\delta$-semiperfect} if, for each $m \in M$, $M/mR$ has a
projective $\delta$-cover.   We prove that if
$M_1$ is  semisimple, $M_2$ is  principally $\delta$-lifting,
$M_1$ and $M_2$ are relatively projective, then $M = M_1\oplus
M_2$ is a  principally $\delta$-lifting module. Among others we also prove that
for a principally $\delta$-semiperfect module $M$, $M$ is
principally $\delta$-supplemented, each  factor module of $M$ is
principally $\delta$-semiperfect, hence any homomorphic image and
any direct summand of $M$ is principally $\delta$-semiperfect.
As an application, for a projective module $M$, it is shown that
$M$ is principally $\delta$-semiperfect if and only if it is principally $\delta$-lifting, and therefore
 a ring $R$ is principally $\delta$-semiperfect if and only if it is principally $\delta$-lifting.

In section 2,  we give some properties of $\delta$-small submodules
that we use in the paper, and in section 3, principally
$\delta$-lifting modules are introduced and various properties of
principally $\delta$-lifting and $\delta$-supplemented modules are
obtained. In section 4,  principally $\delta$-semiperfect modules
are defined and characterized in terms of principally
$\delta$-lifting modules.

 In what follows, by $\Bbb Z$, $\Bbb Q$, $\Bbb Z_n$ and
$\Bbb Z/\Bbb Zn$ we denote, respectively, integers, rational
numbers, the ring of integers and the $\Bbb Z$-module of integers
modulo $n$. For unexplained concepts and
notations, we refer the reader to \cite{AF, MM}.

\section{$\delta$-Small Submodules}

Following Zhou \cite{Zh}, a submodule $N$ of a module $M$ is
called a {\it $\delta$-small submodule} if, whenever $M=N+X$ with
$M/X$ singular, we have $M=X$.  We begin by stating the next lemma which is contained in
\cite[Lemma 1.2 and 1.3]{Zh}.
\begin{lem}\label{ilk} Let $M$ be a module. Then we have the
following.
\begin{enumerate}
\item  If $N$ is $\delta$-small in $M$ and $M=X+N$, then $M=X\oplus Y$ for a projective semisimple submodule $Y$ with $Y\subseteq N$.
\item  If $K$ is $\delta$-small in $M$ and $f:M\rightarrow N$ is a
homomorphism, then $f(K)$ is $\delta$-small in $N$. In
particular, if $K$ is $\delta$-small in  $M\subseteq N$, then $K$
is $\delta$-small in $N$.
\item  Let $K_{1}\subseteq M_{1}\subseteq M$, $K_{2}\subseteq
M_{2}\subseteq M$ and $M=M_{1}\oplus M_{2}$. Then $K_{1}\oplus
K_{2}$ is $\delta$-small in $M_{1}\oplus M_{2}$ if and only if
$K_{1}$ is $\delta$-small in $M_{1}$ and $K_{2}$is $\delta$-small
in $M_{2}$.
\item  Let $N$, $K$ be submodules of $M$ with $K$ is $\delta$-small in
$M$ and $N\leq K$. Then $N$ is also $\delta$-small in $M$.
\end{enumerate}
\end{lem}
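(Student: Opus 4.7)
My plan is to tackle the four parts in the order (4), (2), (3), (1), since each leans on the previous. For (4), the statement is essentially an unwrapping of the definition: if $N+X=M$ with $M/X$ singular, then adding $K$ still gives $K+X=M$ with the same singular quotient, so $\delta$-smallness of $K$ yields $M=X$, whence $N$ is $\delta$-small.

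For (2), I would pull back the hypothesis through $f$. Suppose $f(K)+X'=N$ with $N/X'$ singular, and set $Y=f^{-1}(X')$. A short diagram chase gives $M=K+Y$, and $f$ induces an injection $M/Y\hookrightarrow N/X'$, making $M/Y$ singular as well. The $\delta$-smallness of $K$ in $M$ then forces $M=Y$, so $f(M)\subseteq X'$ and therefore $N=X'$. The ``in particular'' clause is the special case where $f$ is the inclusion $M\hookrightarrow N$.

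For (3), both implications reduce to (2). The ``only if'' direction applies (2) with $f$ the projection $\pi_i\colon M\to M_i$, noting $\pi_i(K_1\oplus K_2)=K_i$. The converse applies (2) with $f$ the inclusion $M_i\hookrightarrow M$ to promote each $K_i$ to $\delta$-smallness in $M$, after which a two-step argument shows $K_1+K_2$ is $\delta$-small: if $(K_1+K_2)+X=M$ with $M/X$ singular, then $K_2+(K_1+X)=M$ with $M/(K_1+X)$ a singular quotient of $M/X$, so $\delta$-smallness of $K_2$ yields $M=K_1+X$, and a second application, this time to $K_1$, yields $M=X$.

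The main obstacle is (1). My plan there is to invoke Zorn's Lemma to choose $Y\leq N$ minimal with respect to $X+Y=M$, and then to show that minimality, combined with the singular-quotient criterion in the definition of $\delta$-small, forces both $X\cap Y=0$ and $Y$ projective semisimple. Concretely, for any cyclic submodule $yR\leq Y$, minimality of $Y$ prevents the complement in $Y$ from covering $M$ together with $X$, and the $\delta$-smallness of $N$ in $M$ rules out the singular obstructions to splitting, forcing $yR$ off as a projective simple summand of $Y$. Making these splittings precise, and stitching them together into the semisimple projective structure of $Y$, is the technical heart of the proof; I would follow the argument of Zhou's Lemmas~1.2--1.3 in \cite{Zh}.
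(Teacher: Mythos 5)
The paper does not actually prove this lemma; it simply cites Zhou's Lemma 1.2 and 1.3 in \cite{Zh}. So there is no internal proof to compare against, and your proposal is supplying detail the paper leaves to the reference.

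Your arguments for parts (4), (2) and (3) are correct. In (2), the pullback $Y=f^{-1}(X')$ indeed satisfies $M=K+Y$, and the induced map $M/Y\to N/X'$ is injective, so $M/Y$ is singular; then $M=Y$ gives $f(K)\subseteq f(M)\subseteq X'$ and hence $N=X'$, even without assuming $f$ surjective. In (3), using (2) with the coordinate projections for one direction, with the inclusions $M_i\hookrightarrow M$ and a two-step absorption for the other, is clean and correct.

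Part (1) is where there is a genuine gap. You propose to use Zorn's Lemma to choose $Y\leq N$ \emph{minimal} with $X+Y=M$. Zorn's Lemma in the downward direction requires every descending chain $Y_1\supseteq Y_2\supseteq\cdots$ in the poset to have a lower bound that still lies in the poset, i.e. $X+\bigcap_i Y_i=M$. That condition fails in general: passing to intersections does not preserve the covering property $X+Y=M$, so minimal elements need not exist and the argument cannot start. Moreover, even granting a minimal such $Y$, it is not clear how minimality alone would force $X\cap Y=0$ or produce the projective semisimple structure of $Y$; the $\delta$-smallness of $N$ must enter in a more precise way. Zhou's actual argument goes in the opposite direction: one takes a \emph{maximal} projective semisimple $Y\subseteq N$ with $X\cap Y=0$ (this poset does satisfy the upward chain condition, since a directed union of projective semisimple submodules meeting $X$ trivially is again such), and then, assuming $X\oplus Y\neq M$, uses $\delta$-smallness of $N$ to see $M/(X\oplus Y)$ is not singular and extracts from $N$ a further projective simple submodule enlarging $Y$, contradicting maximality. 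That maximality-plus-contradiction structure is the technical heart you gesture at but do not capture; as written, the Zorn step in your sketch is not merely incomplete but set up the wrong way.
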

\begin{lem}\label{iki1} Let $M$ be a module and $m\in M$.  Then the following are
equivalent.
\begin{enumerate}
\item  $mR$ is not $\delta$-$small$ in $M$.
\item  There is a maximal submodule $N$ of $M$ such that $m \not\in N$ and $M/N $ singular.
\end{enumerate}
\end{lem}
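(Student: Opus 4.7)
\medskip

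\noindent\textbf{Proof plan.} The plan is to unwind the definition of $\delta$-small on both sides and use a Zorn's lemma argument to promote a complementary submodule to a maximal one.

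For the easy direction (2) $\Rightarrow$ (1), I would start from a maximal submodule $N$ with $m\notin N$ and $M/N$ singular. Maximality together with $m\notin N$ forces $N+mR=M$, while $N\neq M$ shows the witness $X:=N$ prevents $mR$ from being $\delta$-small.

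For (1) $\Rightarrow$ (2), by the definition of $\delta$-small, the failure of $mR$ to be $\delta$-small gives a proper submodule $X\subsetneq M$ with $M=mR+X$ and $M/X$ singular. In particular $m\notin X$. I would then consider the poset
\[
\mathcal{F}=\{L\leq M : X\subseteq L,\ m\notin L\},
\]
which is nonempty (it contains $X$) and closed under unions of chains, so by Zorn's lemma it has a maximal element $N$.

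The two remaining tasks are to check that $N$ is a maximal submodule of $M$ and that $M/N$ is singular. For maximality: if $N\subsetneq N'\leq M$, then by maximality of $N$ in $\mathcal F$ we must have $m\in N'$; hence $mR\subseteq N'$, and combined with $X\subseteq N\subseteq N'$ this gives $N'\supseteq mR+X=M$. For singularity of $M/N$: the inclusion $X\subseteq N$ yields a surjection $M/X\twoheadrightarrow M/N$, and singularity is preserved under homomorphic images (an element of $M/N$ lifts to one whose annihilator is essential, and annihilator-containment preserves essentiality), so $M/N$ is singular as required. The main (and only) subtlety here is the Zorn's lemma step to convert the witness $X$ into a maximal submodule while retaining both $m\notin N$ and the singularity of the quotient; once that is in place, the rest is formal.
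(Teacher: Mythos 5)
Your proof is correct, and it follows the same overall strategy as the paper: extract a witness submodule $X$ from the failure of $\delta$-smallness, then apply Zorn's lemma to promote it to a maximal submodule $N$ with $m\notin N$ and $M/N$ singular. The one genuine difference is the choice of poset. The paper maximizes over $\Gamma=\{B\leq M: B\neq M,\ mR+B=M,\ M/B\ \text{singular}\}$, which requires a small (unstated) check that a union $B_0$ of a chain in $\Gamma$ still has $M/B_0$ singular (it does, since $M/B_0$ is a quotient of $M/B$ for any $B$ in the chain). You instead fix the witness $X$ once and maximize over $\mathcal{F}=\{L: X\subseteq L,\ m\notin L\}$; both defining conditions are manifestly closed under unions of chains, and singularity of $M/N$ then comes for free at the end from $X\subseteq N$. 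Since every $L\in\mathcal{F}$ automatically satisfies $mR+L=M$ and $L\neq M$, your $\mathcal{F}$ is a subposet of $\Gamma$, and your route is marginally more economical at the chain-union step while delivering the identical conclusion. No gaps.
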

\begin{proof}
$ (1)  \Rightarrow  (2)  $ Let $\Gamma :=\{B \leq M\mid B\neq M,
mR+B=M$, $M/B~{\rm singular}\}$. Since  $mR$ is not $\delta$-small
in $M$, there exists a proper submodule $B$ of $M$ such that $mR+B
= M$ and $M/B$ singular. So $\Gamma$ is non empty. Let $\Lambda$
be a nonempty totally ordered subset of $ \Gamma$ and $B_0:=
{\cup_{B\in    \Lambda } }B $. If $m$ is in $ B_0$ then there is a
$B\in \Lambda$ with $m \in B$. Then $B=mR+B=M$ which is a contraction.
So we have $m \notin B_0$ and $B_0\neq M$. Since $mR+B_0=M$ and
$M/B_0$ singular,  $B_0$ is upper bound in $\Gamma $. By Zorn's
Lemma, $\Gamma $ has a maximal element, say $N$. If $N$ is a
maximal submodule of $M$ there is nothing to do. Assume that there
exists a submodule $K$ containing $N$ properly. Since  $N$ is maximal in $\Gamma $, $K$ is not in
$\Gamma $. Since $M = mR + N$ and $N\leq K$, so $M = mR + K$.
$M/K$ is singular as a homomorphic image of singular module $M/N$.
Hence $K$ must belong to the $\Gamma$. This is the required
contradiction.

\noindent $(2)\Rightarrow (1)$  Let $N$ be a maximal submodule
with $m\in M\setminus N$ and $M/N$ singular. We have $M = mR+N$.
Then $mR$ is not $\delta$-small in $M$.
\end{proof}

Let $A$ and $B$ be submodules of $M$ with $A\leq B$. $A$ is called
a {\it $\delta$-cosmall submodule} of $B$ in $M$ if $B/A$ is
$\delta$-small in $M/A$. Let $A$ be a submodule of $M$. $A$ is
called a {\it $\delta$-coclosed submodule} in $M$ if $A$ has no
proper $\delta$-cosmall submodules in $M$. A submodule $A$ is called $\delta$-{\it coclosure}
of $B$ in $M$ if $A$ is $\delta$-coclosed submodule of $M$ and it
is $\delta$-cosmall submodule of $B$. Equivalently, for any
submodule $C\leq A$ with $A/C$ is $\delta$-small in $M/C$ implies
$C = A$ and $B/A$ is $\delta$-small in $M/A$. Note that
$\delta$-coclosed submodules need not always exist.

\begin{lem} Let $A$ and $B$ be submodules of $M$ with $A \leq B$. Then we have:
\begin{enumerate}
\item  $A$ is $\delta$-cosmall submodule of $B$ in $M$ if and only
if $M = A + L$ for any submodule $L$ of $M$ with $M = B + L$ and
$M/L$ singular.
\item  If $A$ is $\delta$-small and $B$ is $\delta$-coclosed in
$M$, then $A$ is $\delta$-small in $B$.
\end{enumerate}
\end{lem}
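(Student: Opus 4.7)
My plan is to handle the two parts in order, with part (1) being essentially a correspondence-theorem unwinding of definitions, and part (2) being a genuine application of part (1).

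For part (1), I would proceed by a direct double implication. For the forward direction, assume $B/A$ is $\delta$-small in $M/A$ and let $L \leq M$ satisfy $M = B + L$ and $M/L$ singular. Passing to $M/A$ gives $M/A = B/A + (L+A)/A$, and the quotient $(M/A)/((L+A)/A) \cong M/(L+A)$ is singular as a quotient of $M/L$. Applying the definition of $\delta$-small to $B/A$ in $M/A$ yields $(L+A)/A = M/A$, i.e.\ $M = A + L$. For the converse, take any $X/A \leq M/A$ with $M/A = B/A + X/A$ and $(M/A)/(X/A) \cong M/X$ singular; then $A \leq X$, $M = B + X$, and $M/X$ is singular, so by hypothesis $M = A + X = X$, whence $X/A = M/A$. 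The main thing to watch here is keeping the correspondence theorem and the preservation of singularity under quotients straight.

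For part (2), the natural strategy is to use the $\delta$-coclosed assumption on $B$: to conclude that $A$ is $\delta$-small in $B$, I want to show that whenever $B = A + Y$ with $B/Y$ singular, then $Y = B$. The cleanest way to exploit $\delta$-coclosedness is to show that $Y$ is $\delta$-cosmall in $B$ in $M$, i.e.\ $B/Y$ is $\delta$-small in $M/Y$, because $\delta$-coclosed $B$ has no proper $\delta$-cosmall submodules. I would establish this via the criterion of part (1): let $L \leq M$ with $M = B + L$ and $M/L$ singular; then $M = B + L = A + Y + L$, and since $M/(Y+L)$ is a quotient of $M/L$ it is singular, so $\delta$-smallness of $A$ in $M$ gives $M = Y + L$. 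By part (1), $Y$ is $\delta$-cosmall in $B$ in $M$, and then $\delta$-coclosedness of $B$ forces $Y = B$.

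The main obstacle I anticipate is in part (2): one might be tempted to work directly inside $B$, but $\delta$-smallness of $A$ is a condition in the ambient $M$, so it only becomes usable once the hypothesis $B = A + Y$ with $B/Y$ singular has been lifted to a relation of the form $M = A + (Y+L)$ with $M/(Y+L)$ singular. Routing this through part (1) makes this lifting step transparent and is the cleanest way to bridge the local hypothesis (in $B$) and the global tool ($\delta$-smallness of $A$ in $M$).
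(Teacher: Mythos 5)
Your proof is correct and follows essentially the same path as the paper: part (1) by direct unwinding of the correspondence theorem together with preservation of singularity under quotients, and part (2) by using the criterion of part (1) to show the complement $Y$ (the paper's $K$) is $\delta$-cosmall in $B$, then invoking $\delta$-coclosedness to force $Y = B$. No meaningful deviation from the paper's argument.
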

\begin{proof} (1) Necessity:   Let $M = B + L$ and $M/L$ be singular. We have  $M/A = B/A + (L+A)/A$ and $M/(L+A)$ is singular as homomorphic image of
singular module $M/L$. Since $B/A$ is $\delta$-small, $M/A =
(L+A)/A$ or $M = L + A$. \\
Sufficiency: Let $M/A = B/A + K/A$ and
$M/K$ singular. Then $M
= B + K$. By hypothesis,  $M = A + K$ and so  $M = K$. Hence $A$ is $\delta$-cosmall submodule of $B$ in $M$.\\
(2) Assume that $A$ is $\delta$-small submodule of  $M$ and $B$ is
$\delta$-coclosed in $M$. Let $B = A + K$ with $B/K$ singular.
Since $B$ is $\delta$-coclosed in $M$, to complete the proof, by
part (1) it suffices to show that $K$ is $\delta$-small submodule
of $B$ in M. Let $M = B + L$ with $M/L$ singular. By assumption,
$M = A + K + L = K + L$ since $M/(K+L)$ is singular. By (1), $K$
is $\delta$-small submodule of $B$ in $M$.
\end{proof}
\begin{lem} Let $A$,  $B$ and $C$ be submodules of $M$ with $M = A + C$ and $A\subseteq B$. If $B \cap C$ is a $\delta$-small submodule of $M$,
then $A$ is a $\delta$-cosmall submodule of $B$ in $M$.
\end{lem}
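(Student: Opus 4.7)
The plan is to invoke the characterization given in part (1) of the preceding lemma: to show that $A$ is a $\delta$-cosmall submodule of $B$ in $M$, it suffices to prove that $M = A + L$ for every submodule $L \leq M$ satisfying $M = B + L$ and $M/L$ singular. So fix such an $L$ and aim to deduce $M = A + L$.

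The key computation will be to show that $M = A + (B\cap C) + L$. Given an arbitrary $m \in M$, first use $M = B + L$ to write $m = b + \ell$ with $b \in B$ and $\ell \in L$. Then use the hypothesis $M = A + C$ to decompose $b = a + c$ with $a \in A$ and $c \in C$. Since $A \subseteq B$ by hypothesis, the element $c = b - a$ lies in $B$, hence in $B \cap C$. This yields $m = a + c + \ell \in A + (B \cap C) + L$, as desired.

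Next I would note that the quotient $M/(A + L)$ is singular, because it is a homomorphic image of $M/L$, which is singular by choice of $L$. Combining this with the equation $M = (A + L) + (B \cap C)$ from the previous step and the hypothesis that $B \cap C$ is $\delta$-small in $M$, the definition of $\delta$-small immediately forces $M = A + L$. By the characterization from part (1) of the preceding lemma, this establishes that $A$ is a $\delta$-cosmall submodule of $B$ in $M$.

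I do not expect any genuine obstacle here: once one recognizes that the appropriate tool is the alternative description of $\delta$-cosmall submodules in terms of arbitrary $L$ with $M = B + L$ and $M/L$ singular, the argument reduces to a one-line element chase together with the singularity of the quotient. The only subtle point worth writing out carefully is the inclusion $c \in B \cap C$, which uses precisely the assumption $A \subseteq B$.
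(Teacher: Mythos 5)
Your proof is correct and is essentially the same as the paper's. The only cosmetic differences are that you route the conclusion through the equivalence in part (1) of the preceding lemma (so your $L$ need not contain $A$), while the paper works directly from the definition of $\delta$-cosmall with $L/A \leq M/A$; and you establish $M = A + (B\cap C) + L$ by an explicit element chase, while the paper invokes the modular law to write $B = A + (B\cap C)$ — the key computation and the appeal to $\delta$-smallness of $B \cap C$ are identical.
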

\begin{proof} Let $M/A = B/A + L/A$ with $M/L$ singular. We have $M = B + L$ and $B = A + (B\cap C)$. Then $M = A + (B\cap C) + L = (B\cap C) + L$.
Hence $M = L$ since $B\cap K$ is $\delta$-small in $M$ and $M/L$ is singular. Hence $B/A$ is $\delta$-small in $M/A$. Thus $A$ is $\delta$-cosmall
submodule of $B$ in $M$.
\end{proof}

%%%%%%%taným
\section{Principally $\delta$-Lifting Modules}
In this section,   we study and investigate some properties of
principally $\delta$-lifting modules.  The following  definition
is motivated by  \cite[Lemma 3.4]{Zh} and Lemma \ref{fini}.

\begin{df} {\rm A module $M$ is called {\it finitely  $\delta$-lifting}  if for any finitely generated submodule
$A$ of $M$ has the {\it $\delta$-lifting property}, that is,  there
is a decomposition $M=N\oplus S$ with $N\leq A$ and $A\cap S$ is
$\delta$-small in $S$. In this case $A\cap S$ is $\delta$-small in
$S$ if and only if $A\cap S$ is $\delta$-small in $M$. A module $M$  is called {\it principally $\delta$-lifting} if for each cyclic submodule has the
 {\it  principally $\delta$-lifting property}, i.e.,  for each $m \in M$, $M$ has a decomposition
$M=A\oplus B$ with $A\leq mR$ and $mR\cap B$ is $\delta$-small in
$B$.} \end{df}

\begin{ex}\label{semi} {\rm Every submodule of any semisimple module satisfies principally $\delta$-lifting
property.}
\end{ex}

\begin{ex}\label{p-n} {\rm Let  $p$ be a prime integer and $n$ any  positive  integer. Then the
$\Bbb Z$-module $M = \Bbb Z/\Bbb Zp^{n}$ is  a principally
$\delta$-lifting module.}
\end{ex}

\noindent Lemma \ref{fini} is proved in \cite{Wi} and \cite{Zh}.

\begin{lem}\label{fini} The following are equivalent for a module
$M$.
\begin{enumerate}
\item  $M$ is finitely $\delta$-lifting.
\item  $M$ is principally $\delta$-lifting.
\end{enumerate}
\end{lem}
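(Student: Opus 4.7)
The plan is to prove each implication in turn. The direction $(1) \Rightarrow (2)$ requires nothing, since any cyclic submodule is finitely generated. For $(2) \Rightarrow (1)$, I would induct on the minimum number of generators of a finitely generated submodule $A \leq M$, but this requires a preliminary observation that will be used on the summand produced at each stage: every direct summand of a principally $\delta$-lifting module is again principally $\delta$-lifting.

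For the preliminary step, let $M = M_1 \oplus M_2$ be principally $\delta$-lifting and fix $m \in M_1$. Applying the hypothesis to $mR \leq M$ yields $M = N \oplus S$ with $N \leq mR$ and $mR \cap S$ $\delta$-small in $M$. Since $N \leq mR \leq M_1$, the modular law gives $M_1 = N \oplus (M_1 \cap S)$, and clearly $mR \cap (M_1 \cap S) = mR \cap S$. Two applications of Lemma \ref{ilk}(3) (first transferring $\delta$-smallness from $M = M_1 \oplus M_2$ down to $M_1$, and then from $M_1 = N \oplus (M_1 \cap S)$ down to $M_1 \cap S$) show that $mR \cap S$ is $\delta$-small in $M_1 \cap S$, so $M_1$ is principally $\delta$-lifting.

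Now for the induction on the number $n$ of generators of $A$. The base case $n=1$ is the hypothesis. For the inductive step, write $A = a_1R + A''$ with $A''$ generated by $n-1$ elements. Applying principally $\delta$-lifting to $a_1R$ yields $M = N_1 \oplus S_1$ with $N_1 \leq a_1R$ and $a_1R \cap S_1$ $\delta$-small in $M$; let $\pi$ denote the projection onto $S_1$ along $N_1$. By the preliminary step, $S_1$ is principally $\delta$-lifting, so the inductive hypothesis applied to $\pi(A'') \leq S_1$ yields $S_1 = N_2 \oplus S_2$ with $N_2 \leq \pi(A'')$ and $\pi(A'') \cap S_2$ $\delta$-small in $S_2$. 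Setting $N = N_1 \oplus N_2$, I check that $N \leq A$ (using $\pi(A'') \subseteq A'' + N_1 \subseteq A$) and $M = N \oplus S_2$.

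It remains to verify that $A \cap S_2$ is $\delta$-small in $S_2$. The projection $\pi_{S_2}\colon M \to S_2$ along $N$ factors through $\pi$, and since $N \leq A$ one has $A \cap S_2 = \pi_{S_2}(a_1R) + \pi_{S_2}(A'')$. The first summand equals $\pi_{S_2}(a_1R \cap S_1)$, which is $\delta$-small in $S_2$ by Lemma \ref{ilk}(2). The second equals $\pi(A'') \cap S_2$ (by the modular law, since $N_2 \leq \pi(A'')$), which is $\delta$-small in $S_2$ by construction. A sum of two $\delta$-small submodules is $\delta$-small, by a direct verification from the definition, so the induction closes. The main obstacle will be the careful bookkeeping in this last step, in particular ensuring that the two decompositions $M = N_1 \oplus S_1$ and $S_1 = N_2 \oplus S_2$ combine so that the $\delta$-smallness of $A \cap S_2$ can be traced back to the $\delta$-smallness properties in each stage.
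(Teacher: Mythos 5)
The paper does not actually prove this lemma; it simply states that the result ``is proved in \cite{Wi} and \cite{Zh}.'' So there is no in-paper argument to compare against. Your proof is a complete, self-contained argument and I checked it line by line: it is correct.

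A few remarks. Your preliminary step (direct summands inherit the principally $\delta$-lifting property) is precisely Proposition~\ref{dik}(1) of the paper, proved there independently by the same modular-law computation; you use Lemma~\ref{ilk}(3) twice to relocate $\delta$-smallness from $M$ to $M_1 \cap S$, which is the right tool. In the inductive step, the key identifications all hold: $\pi_{S_2}$ factors through $\pi$, the identity $\pi_{S_2}(a_1R) = \pi_{S_2}(a_1R\cap S_1)$ follows because $N_1 = \ker\pi \leq \ker\pi_{S_2}$ and $a_1R = N_1 \oplus (a_1R\cap S_1)$, and $\pi_{S_2}(A'') = \pi(A'')\cap S_2$ follows from $N_2 \leq \pi(A'')$ and the modular law inside $S_1 = N_2 \oplus S_2$. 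The closing appeal to the fact that a sum of finitely many $\delta$-small submodules is $\delta$-small is standard and is used elsewhere in the paper. This induction on the number of generators, leaning on closure under direct summands, is exactly the route one finds in the cited sources, so you have in effect reconstructed the proof the authors chose to omit.
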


Let $M$ be a module and $N$ a  submodule of $M$. A submodule
$L$ is called a {\it $\delta$-supplement} of $N$ in $M$ if $M = N
+ L$ and $N\cap L$ is $\delta$-small in $L$(therefore in $M$).
\begin{prop}\label{dik} Let $M$ be a principally $\delta$-lifting module. The we have:
\begin{enumerate}
\item  Every direct summand of $M$ is a principally
$\delta$-lifting module.
\item  Every cyclic submodule $C$ of $M$
has a $\delta$-supplement $S$ which is a direct summand, and $C$
contains a complementary summand of $S$ in $M$.
\end{enumerate}
\end{prop}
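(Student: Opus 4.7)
\emph{Proof plan.} Part (2) is essentially a reading of the definition. Given a cyclic submodule $C = cR$, the principally $\delta$-lifting property applied to $c$ produces $M = A \oplus B$ with $A \leq cR$ and $cR \cap B$ $\delta$-small in $B$. Since $A \leq cR$, we have $M = cR + B$, so $B$ is simultaneously a $\delta$-supplement of $C$ and a direct summand of $M$, while the summand $A$ is complementary to $B$ and sits inside $C$, giving both claims.

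For part (1), let $N$ be a direct summand of $M$ with $M = N \oplus N'$ and fix $n \in N$. The plan is to transport a $\delta$-lifting decomposition from $M$ down to $N$. Applying the hypothesis to $n$ yields $M = A \oplus B$ with $A \leq nR$ and $nR \cap B$ $\delta$-small in $B$. Since $A \leq nR \leq N$, the modular law gives $N = A \oplus (N \cap B)$, and the inclusion $nR \leq N$ forces $nR \cap (N \cap B) = nR \cap B$. So $A$ is the candidate direct summand contained in $nR$, and $N \cap B$ is its complement in $N$; all that remains is to verify that $nR \cap B$ is $\delta$-small in $N \cap B$.

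This last $\delta$-smallness check is the main obstacle, and I would dispatch it via three successive applications of Lemma \ref{ilk}(2). First, the inclusion $B \hookrightarrow M$ promotes $nR \cap B$ from being $\delta$-small in $B$ to being $\delta$-small in $M$. Second, the projection $\pi_N : M \to N$ along $N'$, which fixes $nR \cap B$ since $nR \cap B \leq nR \leq N$, upgrades this to $\delta$-smallness in $N$. Third, the projection $N \to N \cap B$ along $A$ (available from $N = A \oplus (N \cap B)$) also fixes $nR \cap B$, because $nR \cap B \leq N \cap B$, so the image remains $nR \cap B$ and we obtain $\delta$-smallness in $N \cap B$. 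With this, the decomposition $N = A \oplus (N \cap B)$ witnesses the principally $\delta$-lifting property for $N$, completing (1).
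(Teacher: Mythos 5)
Your proposal is correct and follows essentially the same route as the paper: part (2) is read directly off the definition, and part (1) uses the modular law to produce the decomposition $N = A \oplus (N\cap B)$ together with transfers of $\delta$-smallness. Your argument is in fact a bit more explicit than the paper's in the final step of (1): the paper stops at showing $kR\cap(K\cap S)$ is $\delta$-small in the summand $K$ itself (implicitly invoking the equivalence between $\delta$-smallness in a direct summand and in the module, which follows from Lemma~\ref{ilk}(2)--(3)), while you carry out a third application of Lemma~\ref{ilk}(2) to land $nR\cap B$ in $N\cap B$, which is what the definition literally demands.
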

\begin{proof} (1) Let $K$ be a direct summand of $M$ and  $k\in K$. Then $M$ has a decomposition $M=N\oplus S$ with $N\leq kR$ and $kR\cap S$ is $\delta$-small in
$M$. It follows that $K=N\oplus (K\cap S)$, and $kR\cap (K\cap
S)\leq kR\cap
S$ is $\delta$-small in  $M$ and so $kR\cap (K\cap S)$ is $\delta$-small in  $K$. Therefore $%
K$ is a principally $\delta$-lifting module.\\
(2) Assume that $M$ is a principally $\delta$-lifting module and $C$ is a cyclic submodule of $M$. Then we have $M=N \oplus S$,
where $N \leq C$ and $C \cap S$ is $\delta$-small in $M$. Hence
$M=N+S \leq C+S \leq M$, we have $M= C+S$. Since  $S$ is direct
summand and $C \cap S$ is $\delta$-small in $M$, $C \cap S$ is
$\delta$-small in $S$. Therefore $S$ is a $\delta$-supplement of $C$
in $M$.
\end{proof}

\begin{thm}\label{guz}
The following are equivalent for a module $M$.
\begin{enumerate}
\item  $M$  is a principally $\delta$-lifting module.
\item  Every cyclic submodule $C$ of $M$ can be written as $C=N\oplus S$, where $N$ is direct summand and $S$ is $\delta$-small in $M$.
\item  For  every cyclic submodule $C$ of $M$,  there is a direct summand $A$ of $M$ with $A \leq C$ and $C/A$ is $\delta$-small in $M/A$.
\item  Every cyclic submodule $C$ of $M$ has a $\delta$-supplement $K$ in $M$ such that $C\cap K$ is a direct summand in $C$.
\item  For  every cyclic submodule $C$ of $M$,  there is an idempotent $e \in End(M)$ with $eM\leq C$ and $(1-e)C$ is $\delta$-small in $(1-e)M$.
\item  For each $m\in M$, there exist ideals $I$ and $J$ of $R$
such that $mR=mI\oplus mJ$, where $mI$ is direct summand of $M$
and $mJ$ is $\delta$-small in  $M$.
\end{enumerate}
\end{thm}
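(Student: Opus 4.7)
My plan is to prove the six conditions equivalent by the cycle $(1)\Rightarrow(2)\Rightarrow(3)\Rightarrow(4)\Rightarrow(5)\Rightarrow(6)\Rightarrow(1)$. Throughout, the two workhorses will be the modular law and Lemma \ref{ilk}(2), which lets me transport $\delta$-small submodules along any homomorphism (including projections and quotient maps). Each implication is essentially a bookkeeping exercise in which I rewrite a decomposition of $M$ in a new form.

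For $(1)\Rightarrow(2)$, given a cyclic $C=mR$ I use the defining decomposition $M=A\oplus B$ with $A\leq C$ and $C\cap B$ $\delta$-small in $B$; the modular law gives $C=A\oplus(C\cap B)$, and Lemma \ref{ilk}(2) upgrades ``$\delta$-small in $B$'' to ``$\delta$-small in $M$''. For $(2)\Rightarrow(3)$, taking $A:=N$ from (2) I note that $C/A\cong S$ is the image of the $\delta$-small $S$ under the quotient $M\to M/A$, hence $\delta$-small in $M/A$ by Lemma \ref{ilk}(2). For $(3)\Rightarrow(4)$, writing $M=A\oplus K$ I get $M=C+K$ and, by modular law, $C=A\oplus(C\cap K)$ with $C/A\cong C\cap K$; the assumed smallness of $C/A$ in $M/A\cong K$ transfers to smallness of $C\cap K$ in $K$, so $K$ is the desired $\delta$-supplement and $C\cap K$ is clearly a direct summand of $C$.

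The next two steps translate the internal decomposition into an idempotent and then into ideals of $R$. For $(4)\Rightarrow(5)$, I write $C=(C\cap K)\oplus L$ and verify that $M=L\oplus K$: the sum is $M$ because $M=C+K=L+(C\cap K)+K=L+K$, and it is direct because $L\cap K\subseteq L\cap(C\cap K)=0$. The projection $e$ onto $L$ along $K$ then satisfies $eM=L\leq C$ and $(1-e)C=C\cap K$, which is $\delta$-small in $(1-e)M=K$. For $(5)\Rightarrow(6)$, the main obstacle of the theorem, I need to produce \emph{ideals} of $R$ out of an abstract endomorphism. Since $em\in eM\leq mR$, I can write $em=ma$ for some $a\in R$; then $(1-e)m=m(1-a)$. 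I claim $mR=maR\oplus m(1-a)R$: applying $e$ kills $m(1-a)R$ and fixes $maR$, so any equation $mar=m(1-a)s$ forces both sides to vanish. Moreover, $eM\leq mR=maR+m(1-a)R$, and applying $e$ to any $ex\in eM$ shows $ex\in maR$, so $maR=eM$ is a direct summand of $M$; finally $m(1-a)R=(1-e)mR$ is $\delta$-small in $(1-e)M$, hence in $M$ by Lemma \ref{ilk}(2). Taking $I=aR$ and $J=(1-a)R$ delivers (6).

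For the closing implication $(6)\Rightarrow(1)$, write $M=mI\oplus N$ and set $A=mI$, $B=N$; I only need that $mR\cap N$ is $\delta$-small in $N$. Letting $\pi:M\to N$ be the projection, any $x\in mR\cap N$ decomposes as $x=u+v$ with $u\in mI$, $v\in mJ$, and $x=\pi(x)=\pi(v)$, so $mR\cap N\subseteq\pi(mJ)$. Conversely, for $v\in mJ$ we have $\pi(v)=v-(1-\pi)(v)\in mJ+mI\subseteq mR$, so $\pi(mJ)\subseteq mR\cap N$. Thus $mR\cap N=\pi(mJ)$, which is $\delta$-small in $N$ by Lemma \ref{ilk}(2) applied to $\pi$, completing the cycle. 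The whole argument is essentially a sequence of modular-law rewrites glued together by Lemma \ref{ilk}(2); the only genuinely delicate point is the $(5)\Rightarrow(6)$ step, where the passage from endomorphisms of $M$ to right ideals of $R$ must use the cyclicity of $mR$ to locate the scalar $a$.
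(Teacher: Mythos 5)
Your proof is correct and follows the same cyclic chain $(1)\Rightarrow(2)\Rightarrow\cdots\Rightarrow(6)\Rightarrow(1)$ as the paper, driven by the same two tools (modular law plus Lemma~\ref{ilk}(2)). Two small deviations are worth noting, both of which are fine. In $(5)\Rightarrow(6)$ you take $I=aR$ and $J=(1-a)R$ for the scalar $a$ with $em=ma$, whereas the paper sets $I=\{r\in R : mr\in eM\}$ and $J=\{t\in R : mt\in(1-e)mR\}$; these need not coincide (yours are in general smaller), but both satisfy $mI=eM$ and $mJ=(1-e)mR$, and your explicit choice makes the verification of $mR=mI\oplus mJ$ more transparent. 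In $(6)\Rightarrow(1)$ you prove the identity $mR\cap N=\pi(mJ)$ before invoking Lemma~\ref{ilk}(2), which is sharper than the paper's terse remark ``$K\cap mR\cong mJ$'' --- $\delta$-smallness is not an isomorphism invariant of a submodule in isolation, so the image-under-$\pi$ formulation is the one actually needed, and you supply it.
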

\begin{proof}
(1)$\Rightarrow $(2)  Let $C$ be a cyclic submodule of $M$. By
hypothesis  there exist $N$ and $S$ submodules of $M$ such that $N
\leq C$, $C \cap S$ is $\delta$-small in $M$ and $M=N \oplus S$.
Then we have $C=N \oplus (C \cap S)$.

%\noindent (2)$\Rightarrow $(1) Let $C$ be a cyclic submodule of
%$M$, by (2) $C=N\oplus S$, where $N$ is direct summand and $S$ is
%$\delta$-small in $M$. Write $M=N\oplus N{^{\prime }}$, it follows
%that $C=N\oplus C\cap N^{^{\prime }}$. Now let $\pi :N\oplus
%N^{^{\prime }}\rightarrow N^{^{\prime }}$ be the natural
%projection, we have $C\cap N^{^{\prime }}=\pi (C)=\pi (N\oplus
%S)=\pi (S)$ and $\pi (S)$ is $\delta$-small in $M$ as a
%homomorphic image of $S$. Therefore $M$ is a principally
%$\delta$-lifting module.

\noindent
$(2)\Rightarrow (3)$  Let $C$ be a cyclic submodule of $M$ . By
hypothesis,  $C=N\oplus S$,  where $N$ is direct summand and $S$ is
$\delta$-small in $M$. Let $ \pi : M \rightarrow M/N $ be the
natural projection. Since $S$ is $\delta$-small in  $M$, we have
$\pi(S)$ is $\delta$-small in $M/N$.
Since $\pi (S)\cong S\cong C/N$,  $C/N$ is $\delta$-small in $M/N$.

\noindent
$(3)\Rightarrow (4)$  Let $C$ be a cyclic submodule of $M$.   By
hypothesis,  there is a direct summand $A \leq M$ with $A \leq C$
and $C/A$ is $\delta$-small in  $M/A$. Let $M=A \oplus A'$ . Hence
$C=A \oplus (A' \cap C)$.  Let $ \sigma  : M/A \rightarrow A'$
denote the obvious isomorphism.
Then $\sigma (C/A) = A'\cap C$ is $\delta$-small in $A'$.

\noindent
$(4)\Rightarrow (5)$  Let $C$ be any cyclic submodule of $M$ and
$K\leq M$ such that $C\cap K$ is direct summand of $C$, $M=C+K$
and $C\cap K$ is $\delta$-small in $K$ . So $C=(C \cap K) \oplus X
$ for some  $X\leq C$ . Then  $M=X + (C \cap K) + K=X \oplus K$ .
Let $e:M\rightarrow X ~;~ e(x+k)=x$ and $(1-e):M\rightarrow
K~;~e(x+k)=k$ are projection maps. $e(M) \leq  X  \leq C$ and
$(1-e)C=C \cap (1-e)M=C \cap K$ is $\delta$-small in
$(1-e)M$.

\noindent $(5)\Rightarrow (6)$  Let $mR$ be any cyclic submodule
of $M$ . By hypothesis,  there exists an idempotent $e \in End(M)$
such that  $eM\leq mR$, $M=eM\oplus (1-e)M$ and $(1-e)mR$ is $\delta$-small in $(1-e)M$. Note that $(mR)\cap
((1-e)M)$  = $(1-e)mR$ ( for if $m = em_1 + y$,  where $em_1\in eM$,
$y\in (mR)\cap ((1-e)M)$. Then $(1-e)m = em_1 + (1-e)y = y$ and so
$(1-e)mR\leq (mR)\cap ((1-e)M)$. Let $mr = (1-e)m'\in (mR)\cap
((1-e)M)$. Then $mr = (1-e)mr\in (1-e)mR$. So $(mR)\cap
((1-e)M)\leq (1-e)mR$. Thus $(mR)\cap ((1-e)M) = (1-e)mR$ ). So
$mR=eM\oplus (1-e)mR$. Let $I=\{r \in R : mr \in eM \}$ and $J=\{t
\in R : mt \in (1-e)mR \}$. Then $mR=mI\oplus mJ$ , $mI=eM$ and
$mJ=(1-e)mR$ is $\delta$-small in $(1-e)M$.

\noindent
 $(6)\Rightarrow (1)$  Let $m\in M$.   By hypothesis, there exist  ideals $I$ and $J$ of $R$ such
that $mR=mI\oplus mJ$, where $mI$ is direct summand and $mJ$ is
$\delta$-small in $M$. Let $M=mI\oplus K$ for some submodule $K$.
Since $K \cap mR \cong mJ$ and  $mJ$ is $\delta$-small in $M$ , M
is principally $\delta$-lifting.
\end{proof}
Note that every lifting module is principally $\delta$-lifting. There are
principally $\delta$-lifting modules but not lifting.

\begin{ex}\label{örn2}{\rm Let $M$ be the $\mathbb Z$-module $ \mathbb Q$ and  $m\in M$.
It is well known that every cyclic submodule $mR$ of $M$ is small,
therefore $\delta$-small in $M$. Hence $M$ is a principally
$\delta$-lifting $\mathbb Z$-module. If  $N$ is a nonsmall proper
submodule of $M$, then $N$ is neither direct summand nor contains
a direct summand of $M$. It follows that $M$ is not a lifting
$\mathbb Z$-module.}
\end{ex}

It is clear that  every $\delta$-lifting module is principally
$\delta$-lifting. However the converse is not true.

\begin{ex}\label{zit}{\rm Let $R$ and $T$ denote the rings in \cite[Example 4.1]{Zh}, where
 \begin{center}$R=\underset{i=1}{\overset{\infty }{\sum_{}^{}  }} \bigoplus \Bbb{Z}_2 + \Bbb{Z}_2.1=\{(f_1, f_2, \dots , f_n,f, f,\dots)
\in\underset{i=1}{\overset{\infty }{\prod }} \Bbb{Z}_2\} $\end{center}  and $T= \left\{ \left[
\begin{array}{rr}
x&y\\
o&x\\
\end{array}
\right] ~:~x \in R,~y \in Soc(R)  \right\}.$ Then ${\rm Rad}_\delta (T)= \left[
\begin{array}{rr}
0&Soc(R)\\
0&0\\
\end{array}
\right]$ and $T/ {\rm Rad}_\delta (T)$ is not semisimple as isomorphic to $R$. So $T$ is not $\delta$-semiperfect by \cite[Theorem 3.6]{Zh}. Hence
$T$ is not a $\delta$-lifting module over $T$. It is easy to show
 that $T/ {\rm Rad}_\delta (T)$
lift to idempotents of $T$, so $T$ is a semiregular ring. Since $T$ is a $\delta$-semiregular ring, every finitely generated right ideal $H$ of $T$
can be written as $H=aT\oplus S$, where $a^2=a \in T$ and $S \leq {\rm Rad}_\delta (T) $ by  \cite[Theorem 3.5]{Zh}. Hence $T$ is  a principally
$\delta$-lifting module.}
\end{ex}

\begin{prop} Let $M$ be a principally $\delta$-lifting module. If $M = M_1 + M_2$ such that $M_1\cap M_2$ is cyclic, then $M_2$ contains a
$\delta$-supplement of $M_1$ in $M$.
\end{prop}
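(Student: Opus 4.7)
The plan is to apply the principal $\delta$-lifting property of $M$ at the cyclic submodule $C := M_1 \cap M_2$ and to extract the required $\delta$-supplement of $M_1$ inside $M_2$ from the resulting decomposition of $M$.

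First I would apply the principal $\delta$-lifting property to the cyclic submodule $C$: this yields a decomposition $M = A \oplus B$ with $A \leq C$ and $C \cap B$ $\delta$-small in $B$ (equivalently in $M$, by Lemma~\ref{ilk}(2), since $B$ is a direct summand). Because $A \leq C \leq M_2$, the modular law gives $M_2 = A \oplus (M_2 \cap B)$, so I set $L := M_2 \cap B$, with $L \leq M_2$. Two routine checks follow: $M = M_1 + M_2 = M_1 + A + L = M_1 + L$ (using $A \leq C \leq M_1$), and $M_1 \cap L = M_1 \cap M_2 \cap B = C \cap B$.

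The heart of the proof is to show that $M_1 \cap L = C \cap B$ is $\delta$-small in $L$. I would suppose $L = (C \cap B) + Y$ with $L/Y$ singular and aim to conclude $L = Y$. Projecting $M = M_1 + M_2$ along $A$ onto $B$ yields $B = Z + L$ where $Z := M_1 \cap B$ and $Z \cap L = C \cap B$; substituting $L = (C \cap B) + Y$ then gives $B = Z + Y$. The strategy is to pick $V \leq Z$ satisfying $Z = (C \cap B) \oplus V$, form $X := V + Y$, verify that $B = (C \cap B) + X$ with $B/X \cong L/Y$ singular, and apply the $\delta$-smallness of $C \cap B$ in $B$ to force $B = V + Y$. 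The chain $Y \cap V \leq Y \cap Z \leq L \cap Z = C \cap B$ together with $V \cap (C \cap B) = 0$ then gives $B = V \oplus Y$; any $\ell \in L$ decomposes uniquely as $y + v$ with $v \in L \cap V \leq (C \cap B) \cap V = 0$, forcing $\ell \in Y$, so $L = Y$.

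The main obstacle is producing the complement $V$ of the $\delta$-small cyclic submodule $C \cap B$ inside $Z = M_1 \cap B$. The natural attempt is to invoke Lemma~\ref{ilk}(1) --- which splits off a projective semisimple complement from any equation $Z = W + (C \cap B)$ --- but this requires $C \cap B$ to be $\delta$-small in $Z$, which is not immediate as $Z$ is not a priori a direct summand of $M$. An alternative is to argue directly that $L$ itself is a direct summand of $B$ and apply Lemma~\ref{ilk}(3) to transfer $\delta$-smallness from $B$ to $L$. The cyclicity of $C$ (whence cyclicity of $C \cap B$) is what should make such a splitting accessible; this is the key technical point of the argument.
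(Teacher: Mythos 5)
Your decomposition and set-up match the paper's own proof step for step: the paper writes $M_1\cap M_2 = N\oplus S$ via Theorem~\ref{guz}(2), takes $M = N\oplus N'$, and considers $M_2\cap N'$; this is exactly your $A$, your $B$, and your $L = M_2\cap B$. The verifications that $M_2 = A\oplus L$ by the modular law, that $M = M_1 + L$, and that $M_1\cap L = C\cap B$ are all correct, so you have reduced the problem to the right statement.

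The gap is that you never establish the final claim, namely that $M_1\cap L = C\cap B$ is $\delta$-small in $L$. You know it is $\delta$-small in $B$, but $\delta$-smallness does not automatically restrict to an arbitrary submodule, and $L = M_2\cap B$ is not a priori a direct summand of $B$, which is what Lemma~\ref{ilk}(3) would need. Of your two proposed fixes, the first --- splitting off a complement $V$ of $C\cap B$ inside $Z = M_1\cap B$ via Lemma~\ref{ilk}(1) --- fails, as you yourself observe, because it needs $C\cap B$ to be $\delta$-small in $Z$, which is not established. The second --- showing $L$ is a direct summand of $B$ --- is announced but never carried out. Worse, the closing remark that cyclicity of $C$ forces $C\cap B$ to be cyclic is false over a general ring; submodules of a cyclic module need not be cyclic, so that route cannot be rescued as stated. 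As submitted, the proposal therefore has a genuine gap exactly at the point the proposition is to be proved. It is only fair to add that the paper's own proof passes from ``$\delta$-small in $N'$'' to ``$\delta$-small in $M_2\cap N'$'' with no visible justification, so the difficulty you ran into is a real one and not a sign you chose the wrong strategy --- but you still owe the argument.
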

\begin{proof} Assume that $M = M_1 + M_2$ and $M_1\cap M_2$ is cyclic. Then we have $M_1\cap M_2 = N \oplus S$,
where $N$ is direct summand of $M$ and
$S$ is $\delta$-small in $M$. Let $M = N\oplus N'$ and $M_2 =
N\oplus (M_2\cap N')$. It follows that $M_1\cap M_2 = N \oplus
(M_1\cap M_2 \cap N') = N\oplus S$. Let $\pi : M_2 = N\oplus
(M_2\cap N')\rightarrow N'$ be the natural projection. It follows
that $\pi(M_1\cap M_2\cap N') = M_1\cap M_2\cap N' = \pi(S)$.
Since $S$ is $\delta$-small in $M$, it  is $\delta$-small in
$N'$ by Lemma \ref{ilk}. Hence $M = M_1 + (M_2\cap N')$, $M_2\cap
N'\leq M_2$ and $M_1\cap (M_2\cap N')$ is $\delta$-small in $M_2\cap N'$. $M_2\cap N'$ is contained in $M_2$ and
a $\delta$-supplement of $M_1$ in $M_2$. This completes the proof.
\end{proof}

Let $M$ be a module. A submodule $N$ is called {\it fully
invariant} if for each endomorphism $f$ of $M$, $f( N )\leq N$.
 Let $S=End(M_R)$, the
ring of $R$-endomorphisms of $M$. Then $M$ is a left $S$-, right
$R$-bimodule and a principal submodule $N$ of the right $R$-module $M$ is
fully invariant if and only if $N$ is a sub-bimodule of $M$.
Clearly $0$ and $M$ are fully invariant submodules of $M$. The
right $R$-module $M$ is called a {\em duo module} provided every
submodule of $M$ is  fully invariant. For the readers' convenience
we state and prove Lemma \ref{kes} which is proved in \cite{AHP}.

\begin{lem} \label{kes} Let a module $M=\underset{i\in I}\bigoplus M_i$ be a direct sum of submodules
$M_i$ $(i\in I)$  and  let $N$ be a fully invariant submodule of
$M$. Then $N=\underset{i\in I}\bigoplus (N\cap M_i)$.
\end{lem}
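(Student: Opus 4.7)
The plan is to prove the two inclusions $\bigoplus_{i \in I}(N \cap M_i) \subseteq N$ and $N \subseteq \bigoplus_{i \in I}(N \cap M_i)$ separately. The first inclusion is immediate since each summand $N \cap M_i$ is contained in $N$, and $N$ is closed under finite sums. So the content of the lemma lies entirely in the reverse inclusion.

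For the reverse direction, I would take an arbitrary $n \in N$ and write it uniquely as $n = \sum_{i \in F} m_i$ for some finite subset $F \subseteq I$ and components $m_i \in M_i$. The goal is to show $m_i \in N \cap M_i$ for every $i \in F$. To do this, I would exploit the direct sum decomposition $M = \bigoplus_{i \in I} M_i$ to produce, for each $j \in I$, the canonical projection $\pi_j : M \to M_j$ associated with this decomposition. Composing with the inclusion $\iota_j : M_j \hookrightarrow M$ yields an endomorphism $e_j = \iota_j \circ \pi_j \in \mathrm{End}(M)$. Since $N$ is fully invariant, $e_j(N) \subseteq N$, hence $m_j = e_j(n) \in N$; and of course $m_j \in M_j$, so $m_j \in N \cap M_i$.

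This forces $n = \sum_{i \in F} m_i \in \bigoplus_{i \in I}(N \cap M_i)$, giving the reverse inclusion and finishing the proof. The sum on the right is automatically direct since the $M_i$ are, so no independence argument is needed.

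There is no real obstacle here; the only subtle point is to notice that full invariance must be applied to the composite endomorphism $\iota_j \circ \pi_j$ of $M$ rather than to the projection $\pi_j : M \to M_j$, which is not literally an endomorphism of $M$. Once this is phrased correctly, the proof is just a one-line application of the defining property of a fully invariant submodule.
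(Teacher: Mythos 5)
Your proof is correct and follows the same route as the paper: apply full invariance to the composite endomorphism $i_j p_j$ (inclusion after projection) to show each component of an element of $N$ lies in $N \cap M_j$. The paper's proof phrases it a bit more compactly as a chain of inclusions, but the underlying argument is identical.
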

\begin{proof} For each $j\in I$, let $p_j: M\rightarrow M_j$ denote the canonical projection and let $i_j: M_j \rightarrow M$ denote inclusion.
Then $i_jp_j$ is an endomorphism of $M$ and hence
$i_jp_j(N)\subseteq N$ for each $j\in I$. It follows that $N
\subseteq \underset{j \in I}\bigoplus  \,\,i_jp_j(N)\subseteq   \underset{j \in I}\bigoplus (N\cap M_j)\subseteq N$,
so that $N= \underset{j \in I}\bigoplus (N\cap
M_j)$.
\end{proof}

One may suspect  that if $M_1$ and $M_2$ are principally  $\delta$-lifting modules, then  $M_1\oplus M_2$ is also principally  $\delta$-lifting.
But this is not the case.

\begin{ex}\label{ilkör} {\rm Consider the $\Bbb Z$-modules $M_1 = \Bbb Z/\Bbb Z2$ and $M_2 = \Bbb Z/\Bbb Z8$. It is clear that $M_1$ and $M_2$ are
principally $\delta$-lifting. Let $M = M_1\oplus M_2$. Then $M$ is
not a principally $\delta$-lifting $\Bbb Z$-module. Let $N_1 = (\overline{1}, \overline{2})\Bbb Z $ and
$N_2 =(\overline{1}, \overline{1}) \Bbb Z$.  Then $M = N_1 + N_2$, $N_1$ is not a direct summand of $M$ and
does not contain any nonzero direct summand of $M$. For any proper
submodule $N$ of $M$, $M/N$ is singular $\Bbb Z$-module. Hence the
principal submodule does not satisfy $\delta$-lifting property. It
follows that $M$ is not principally $\delta$-lifting $\Bbb
Z$-module. By the same reasoning, for any prime integer $p$, the
$\Bbb Z$-module $M = (\Bbb Z/\Bbb Zp)\oplus (\Bbb Z/\Bbb Zp^3)$ is
not principally $\delta$-lifting. }
\end{ex}

We have already observed by the preceding example that the direct
sum of principally $\delta$-lifting modules need not be
principally $\delta$-lifting. Note the following fact.

\begin{prop} Let $M = M_1\oplus M_2$ be a decomposition of $M$ with $M_1$  and $M_2$ principally $\delta$-lifting modules. If $M$ is a duo module,
then $M$ is principally $\delta$-lifting.
\end{prop}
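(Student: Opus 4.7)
The plan is to reduce the problem to the two summands via the duo hypothesis, using Lemma \ref{kes}. Fix $m\in M$; we must exhibit a decomposition of $M$ demonstrating that $mR$ has the $\delta$-lifting property.

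First, since $M$ is duo, every submodule of $M$ is fully invariant. In particular $mR$ is fully invariant, so Lemma \ref{kes} applied to $M=M_1\oplus M_2$ gives
\[
mR = (mR\cap M_1)\oplus (mR\cap M_2).
\]
Because $mR$ is cyclic and each $mR\cap M_i$ is a direct summand of $mR$ (hence a homomorphic image of $mR$), each $mR\cap M_i$ is itself cyclic. Now invoke the principally $\delta$-lifting hypothesis on each $M_i$ applied to the cyclic submodule $mR\cap M_i$: we obtain decompositions
\[
M_i = A_i\oplus B_i \qquad (i=1,2),
\]
with $A_i\leq mR\cap M_i$ and $(mR\cap M_i)\cap B_i$ $\delta$-small in $B_i$.

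Next I would assemble these into a decomposition of $M$. Set $A:=A_1\oplus A_2$ and $B:=B_1\oplus B_2$, so $M=A\oplus B$ and $A\leq (mR\cap M_1)\oplus (mR\cap M_2)=mR$. It remains to verify that $mR\cap B$ is $\delta$-small in $B$. Since $B$ is a submodule of $M$, it is also fully invariant by the duo hypothesis; applying Lemma \ref{kes} to $mR$ with respect to the decomposition $M=M_1\oplus M_2$ and intersecting carefully (or more directly, using $mR=(mR\cap M_1)\oplus (mR\cap M_2)$ together with $B\subseteq M_1\oplus M_2$ and the direct sum $B=B_1\oplus B_2$ with $B_i\leq M_i$) yields
\[
mR\cap B \;=\; \bigl((mR\cap M_1)\cap B_1\bigr)\oplus \bigl((mR\cap M_2)\cap B_2\bigr).
\]
Each summand on the right is $\delta$-small in the corresponding $B_i$ by construction, so Lemma \ref{ilk}(3) implies that $mR\cap B$ is $\delta$-small in $B=B_1\oplus B_2$.

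The main obstacle I anticipate is the bookkeeping in the last displayed identity: one has to be sure that, because $mR$ decomposes compatibly with $M_1\oplus M_2$, the intersection $mR\cap (B_1\oplus B_2)$ really splits as the direct sum of the coordinate intersections. This is precisely where the duo hypothesis is used a second time (to guarantee both $mR$ and $B$ are fully invariant, so Lemma \ref{kes} applies), and it is the conceptual heart of the argument; once it is in place, the rest is a routine combination of the principally $\delta$-lifting property of the $M_i$ and Lemma \ref{ilk}(3).
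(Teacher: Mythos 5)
Your proof is essentially the paper's: both split $mR$ via Lemma~\ref{kes}, apply the principally $\delta$-lifting hypothesis to each cyclic piece $mR\cap M_i$, assemble $A=A_1\oplus A_2$ and $B=B_1\oplus B_2$, and conclude with Lemma~\ref{ilk}(3). One small clarification: the duo hypothesis is not actually needed a second time for the splitting $mR\cap B=(mR\cap B_1)\oplus(mR\cap B_2)$ — once $mR$ and $B$ each decompose compatibly with $M=M_1\oplus M_2$ (the former by Lemma~\ref{kes}, the latter by construction), the identity is an elementary fact about intersections along a fixed direct sum, which is exactly the more direct route you already sketch in your parenthetical.
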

\begin{proof}Let $M = M_1\oplus M_2$ be a duo module and  $mR$ be a submodule of  $M$. By Lemma \ref{kes},
$mR= ((mR) \cap M_1) \oplus ((mR) \cap M_2)$. Since $ (mR) \cap M_1$ and $(mR) \cap M_2$ are principal submodules of $M_1$ and $M_2$ respectively,
there exist $A_1, B_1 \leq M_1$ such that
$A_1 \leq (mR) \cap M_1 \leq M_1=A_1 \oplus B_1$,  $B_1 \cap ((mR) \cap
M_1)=B_1 \cap (mR)$ is $\delta$-small in $B_1$, and   $A_2, B_2 \leq
M_2$ such that $A_2 \leq (mR) \cap M_2 \leq M_2=A_2 \oplus B_2$,
$B_2 \cap ((mR) \cap M_2)=B_2 \cap (mR)$ is $\delta$-small in $B_2$. Then
$M=A_1 \oplus A_2 \oplus B_1 \oplus B_2$, $A_1 \oplus A_2 \leq N$
and $(mR) \cap ( B_1 \oplus B_2)= ((mR) \cap B_1) \oplus ((mR) \cap B_2)$  is
$\delta$-small  in $M_1 \oplus M_2$.
\end{proof}

\begin{lem}\label{proj} The following are equivalent for a module $M = M'\oplus
M''$.
\begin{enumerate}
\item  $M'$ is $M''$-projective.
\item  For each submodule $N$ of $M$ with $M = N + M''$, there
exists a submodule $N'\leq N$ such that $M = N'\oplus M''$.
\end{enumerate}
\end{lem}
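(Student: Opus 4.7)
The plan is to prove this standard characterization of relative projectivity by handling each implication via a pullback/graph construction.

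For $(1)\Rightarrow(2)$: Assume $M'$ is $M''$-projective and let $N\le M$ satisfy $M=N+M''$. First I would build a homomorphism $\phi\colon M'\to M''/(N\cap M'')$ as follows: for $m'\in M'$, use $M=N+M''$ to write $m'=n+m''$ with $n\in N$, $m''\in M''$, and set $\phi(m')=m''+(N\cap M'')$. A quick computation shows any two choices differ by an element of $N\cap M''$, so $\phi$ is well-defined (and is clearly $R$-linear). Apply $M''$-projectivity to the epimorphism $g\colon M''\twoheadrightarrow M''/(N\cap M'')$ and $\phi$ to get $h\colon M'\to M''$ with $gh=\phi$. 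Then set
\[
N' \;=\; \{\,m'-h(m') : m'\in M'\,\}.
\]
I would verify in turn: (a) $N'\le N$, because $m'-h(m')=n+(m''-h(m'))$ with $m''-h(m')\in N\cap M''$; (b) $N'\cap M''=0$, because $m'-h(m')\in M''$ forces $m'\in M'\cap M''=0$; and (c) $N'+M''=M$, by decomposing a general $m=m'+m''_0$ as $(m'-h(m'))+(h(m')+m''_0)$. Together these give $M=N'\oplus M''$.

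For $(2)\Rightarrow(1)$: Let $g\colon M''\to X$ be an epimorphism and $f\colon M'\to X$ a homomorphism; I want $h\colon M'\to M''$ with $gh=f$. I would form the pullback submodule
\[
N \;=\; \{\,(m',m'')\in M'\oplus M'' : f(m')=g(m'')\,\}\le M.
\]
Surjectivity of $g$ gives $M=N+M''$: for any $(m',m'')\in M$, pick $m''_0$ with $g(m''_0)=f(m')$, so $(m',m''_0)\in N$ and $(0,m''-m''_0)\in M''$. Apply (2) to obtain $N'\le N$ with $M=N'\oplus M''$. Then for each $m'\in M'$ there is a unique element of $N'$ of the form $(m',-h(m'))$ (read off from the decomposition of $(m',0)$ along $N'\oplus M''$), and this $m'\mapsto h(m')$ is $R$-linear by uniqueness. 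Since $(m',h(m'))\in N'\subseteq N$, the defining condition of $N$ gives $g(h(m'))=f(m')$, i.e.\ $gh=f$.

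The only genuinely delicate step is the well-definedness of $\phi$ in $(1)\Rightarrow(2)$: one must check that the coset of $m''$ modulo $N\cap M''$ does not depend on the chosen decomposition $m'=n+m''$. Everything else is a routine verification that direct-sum decompositions and subgroup containments behave as advertised, once the right graph/pullback submodules are written down.
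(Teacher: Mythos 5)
The paper does not prove this lemma; it simply refers the reader to Wisbauer \cite[41.14]{Wi}. Your argument is the standard textbook one (graph construction for $(1)\Rightarrow(2)$, pullback construction for $(2)\Rightarrow(1)$) and is correct in substance, so there is no methodological divergence to report. One small sign slip in $(2)\Rightarrow(1)$ is worth fixing for consistency: you first say the $N'$-component of $(m',0)$ has the form $(m',-h(m'))$, but then assert $(m',h(m'))\in N'\subseteq N$ and conclude $gh=f$. Pick one convention. If $h(m')$ is defined to be the second coordinate of the $N'$-component of $(m',0)$, then $(m',h(m'))\in N'\subseteq N$ and the defining relation of $N$ gives $g(h(m'))=f(m')$ directly; if you keep $(m',-h(m'))\in N'$, then you obtain $g(-h)=f$ and should take $-h$ as the lift. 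Either way the $R$-linearity of $h$ follows, as you say, from uniqueness of the decomposition in $N'\oplus M''$, and the proof is complete.
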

\begin{proof} See \cite[41.14]{Wi}
\end{proof}
\begin{thm} Let $M_1$ be a semisimple module and $M_2$ a principally $\delta$-lifting module. Assume that $M_1$ and $M_2$ are relatively projective.
Then $M = M_1\oplus M_2$ is principally $\delta$-lifting.
\end{thm}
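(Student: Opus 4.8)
The plan is to take an arbitrary element $m \in M = M_1 \oplus M_2$, write $m = m_1 + m_2$ with $m_i \in M_i$, and produce the required decomposition of $M$ by combining a trivial decomposition coming from the semisimplicity of $M_1$ with the principally $\delta$-lifting decomposition furnished by $M_2$, using relative projectivity to glue the two pieces. Since $M_2$ is principally $\delta$-lifting, there is a decomposition $M_2 = A \oplus B$ with $A \le m_2 R$ and $m_2 R \cap B$ $\delta$-small in $B$ (hence in $M_2$ by Lemma~\ref{fini}, and in $M$ by Lemma~\ref{ilk}(2)). I first want to reduce to understanding $mR$ in terms of $m_1 R$ and $m_2 R$: note $mR \le m_1 R + m_2 R \le M_1 \oplus M_2$, but $mR$ need not be the direct sum of its intersections with the summands, so the argument cannot just invoke Lemma~\ref{kes} (that needed a duo hypothesis). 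Instead I would work with the submodule $N := mR$ directly and build a direct summand of $M$ inside $N$.

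The key construction: since $M_1$ is semisimple, the submodule $N \cap M_1$ (indeed $(N + M_1) \cap M_1$ sitting over things) is a direct summand of $M_1$, say $M_1 = (N\cap M_1) \oplus C$ for some $C \le M_1$. On the other side, I would like to extract from $N$ a summand of $M$ that handles the $M_2$-component. Consider the projection $\pi_2 : M \to M_2$ and look at $\pi_2(N)$, a cyclic submodule of $M_2$ (generated by $m_2$), so by the principally $\delta$-lifting property of $M_2$ we get $M_2 = A \oplus B$ with $A \le \pi_2(N) = m_2R$ and $m_2R \cap B$ $\delta$-small. Now $M = M_1 \oplus A \oplus B = (M_1 \oplus B) + N$? — one must check $M = N + (M_1 \oplus B)$: indeed $M_2 = A + B$ with $A \le \pi_2(N)$, and any element of $A$ lifts into $N$ modulo $M_1$, so $M = N + M_1 + B$. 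Then I would invoke relative projectivity: $M_2$ is $M_1$-projective allows one to replace $M_1 \oplus B$-type complements by actual direct summands of the appropriate shape, and $M_1$ being $M_2$-projective (Lemma~\ref{proj}) lets me, from $M = N + (M_1 \oplus B)$ — rewritten so that one summand equals a summand of $M$ — find $N' \le N$ with $M = N' \oplus (\text{that summand})$. The bookkeeping is to arrange the application of Lemma~\ref{proj} so that the "$M''$" there is $M_1 \oplus B$ or a conjugate of it which is genuinely a direct summand of $M$, giving $M = N' \oplus (M_1\oplus B)$ with $N' \le N = mR$.

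Having obtained $M = N' \oplus P$ with $N' \le mR$ and $P$ a direct summand, it remains to show $mR \cap P$ is $\delta$-small in $P$. Here $mR = N' \oplus (mR \cap P)$ by modularity, and $mR \cap P$ maps isomorphically (via the projection along $N'$) onto a subfactor controlled by $m_2R \cap B$ together with a semisimple projective piece from $M_1$; a semisimple projective submodule is $\delta$-small in any module containing it (this is exactly the content of Lemma~\ref{ilk}(1), and is how $\delta$-small behaves for semisimple projectives), and $m_2R\cap B$ is $\delta$-small by construction, so by Lemma~\ref{ilk}(3) their "sum" inside $P$ is $\delta$-small in $P$. I expect the main obstacle to be the gluing step: carefully choosing which relative-projectivity statement (Lemma~\ref{proj}, i.e.\ $M_1$ being $M_2$-projective, versus $M_2$ being $M_1$-projective) to apply and to which decomposition, so that the submodule one extracts genuinely lies inside $mR$ and the complementary summand genuinely splits $M$; the $\delta$-smallness verification at the end is then routine given Lemma~\ref{ilk}. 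An alternative, possibly cleaner route is to first handle the case $m \in M_2$ (immediate) and $m \in M_1$ (immediate, $mR$ a summand), then treat general $m$ by writing $mR + M_1 = M_1 \oplus (\text{summand of }M_2)$ after an automorphism correction — I would try this reduction first and fall back on the direct construction above if the automorphism does not obviously exist.
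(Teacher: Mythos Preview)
Your outline has the right ingredients but the gluing step, which you correctly flag as the obstacle, does not go through as written. You want to apply Lemma~\ref{proj} to $M = mR + (M_1\oplus B)$ using the decomposition $M = A \oplus (M_1\oplus B)$, and extract $N'\le mR$ with $M = N'\oplus(M_1\oplus B)$. For Lemma~\ref{proj} this requires $A$ to be $(M_1\oplus B)$-projective. Now $A$ is a summand of $M_2$, so $A$ is $M_1$-projective; but nothing in the hypotheses makes $A$ projective over $B$ (another summand of $M_2$), since $M_2$ is not assumed quasi-projective. So the application of Lemma~\ref{proj} is unjustified. Even if you grant it, the final $\delta$-smallness check fails: $mR\cap(M_1\oplus B)$ picks up an $M_1$-contribution which is semisimple but not, in general, projective, and a singular simple summand is \emph{not} $\delta$-small (take $M = S\oplus X$ with $S$ simple singular: then $M/X\cong S$ is singular yet $M\ne X$). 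Your appeal to ``a semisimple projective piece from $M_1$'' assumes projectivity that was never hypothesised.

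The paper's proof reverses the order of the two steps, and this is what makes it work. One first sets $K = M_1\cap(mR+M_2)$, splits $M_1 = K\oplus K_1$ by semisimplicity, and observes $M = mR + (M_2\oplus K_1)$. Now Lemma~\ref{proj} is applied with $M' = K$ and $M'' = M_2\oplus K_1$: here $K$ is $M_2$-projective by hypothesis and $K$ is $K_1$-projective because $M_1$ is semisimple (hence automatically quasi-projective), so $K$ is genuinely $(M_2\oplus K_1)$-projective. This yields $N\le mR$ with $M = N\oplus M_2\oplus K_1$; crucially the $M_1$-content of $mR$ is now absorbed into $N$. Only then does one look at $mR\cap(M_2\oplus K_1)$, which turns out to equal $M_2\cap(mR+K_1)$, a cyclic submodule of $M_2$, and the principally $\delta$-lifting property of $M_2$ finishes the job. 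The moral: use relative projectivity on the $M_1$ side (where semisimplicity gives self-projectivity for free), not on the $M_2$ side.
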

\begin{proof} Let $0\neq m\in M$ and let $K=M_1 \cap ((mR)+M_2)$. We divide the proof into two
cases:\\
\noindent {\it Case (i):} $K\neq 0$. Then $M_1=K\oplus K_1$ for some
submodule $K_1$ of $M_1$ and so $M=K\oplus K_1 \oplus
M_2=(mR)+(M_2\oplus K_1)$. Hence $K$ is $M_2\oplus K_1$-projective. \linebreak By Lemma \ref{proj}, there exists a submodule $N$ of
$mR$ such that $M=N\oplus (M_2\oplus K_1)$. We may assume
$(mR)\cap(M_2\oplus K_1)\neq 0$. Note that for any submodule $L$
of $M_2$,  we have $(mR)\cap(L+K_1) = L\cap((mR)+K_1)$. In
particular \linebreak $(mR)\cap(M_2+K_1) = M_2\cap(mR+K_1)$. Then $mR =
N\oplus (mR)\cap (K_1\oplus M_2)$. There exist $n\in N$ and $m'\in
(mR)\cap (K_1\oplus M_2)$ such that $m = n + m'$. Then $nR = N$
and $m'R = (mR)\cap (K_1\oplus M_2)$. Since $(mR)\cap(M_2+K_1) =
M_2\cap((mR)+K_1)$, $M_2\cap((mR)+K_1)$ is a principal submodule
of $M_2$ and $M_2$ is principally $\delta$-lifting, there exists a
submodule $X$ of $M_2\cap((mR)+K_1)= (mR)\cap(M_2\oplus K_1)$ such
that $M_2=X\oplus Y$ and $Y\cap M_2\cap((mR)+K_1)=
Y\cap((mR)+K_1)$ is $\delta$-small in $M_2\cap((mR)+K_1)$ and in
$M_2$. Hence $M=(N\oplus X)\oplus (Y\oplus K_1 )$. Since $N\oplus
X\leq mR$ and $(mR)\cap(Y\oplus K_1)=Y\cap((mR)+K_1)$,
$(mR)\cap(Y\oplus K_1)=Y\cap ((mR)+K_1)$ is
$\delta$-small in $Y\oplus K_1$. So $M$ is $\delta$-lifting. \\
\noindent {\it  Case (ii): }  $K=0$. Then $mR\leq M_2$. Since $M_2$ is
$\delta$-lifting, there exists a submodule $X$ of $mR$ such that
$M_2=X\oplus Y$ and $(mR)\cap Y$ is $\delta$-small in $Y$ for some
submodule $Y$ of $M_2$. Hence $M=X\oplus (M_1 \oplus Y)$. Since
$(mR)\cap (M_1 \oplus Y) = (mR)\cap Y$ and $(mR)\cap (M_1 \oplus
Y) = (mR)\cap Y$ is $\delta$-small in $Y$. By Lemma \ref{ilk} (3),
$(mR)\cap (M_1\oplus Y)$ is $\delta$-small in $M_1\oplus Y$. It
follows that $M$ is $\delta$-lifting.
\end{proof}
A module  $M$ is said to be {\it a principally semisimple} if every cyclic submodule is a direct summand of $M$.
Tuganbayev calls a principally semisimple module as a regular
module in \cite{Cl}. Every semisimple module is principally
semisimple.  Every principally semisimple module is principally
$\delta$-lifting. For a module $M$, we write Rad$_{\delta}(M) = \sum \{L \mid L$ is a $\delta$-small submodule
of $M\}$.

\begin{lem}\label{ss} Let $M$ be a  principally $\delta$-lifting module. Then $M/${\rm Rad}$_{\delta}(M)$ is a principally semisimple module.
\end{lem}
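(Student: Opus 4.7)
The plan is to take a cyclic submodule of $\overline{M} := M/\mathrm{Rad}_\delta(M)$, which necessarily has the form $\overline{mR} = (mR + \mathrm{Rad}_\delta(M))/\mathrm{Rad}_\delta(M)$ for some $m\in M$, and show that it is a direct summand of $\overline{M}$. The first move is to apply Theorem \ref{guz}(2) to the cyclic submodule $mR$: write $mR = N \oplus S$ where $N$ is a direct summand of $M$, say $M = N \oplus N'$, and $S$ is $\delta$-small in $M$. Since $S \subseteq \mathrm{Rad}_\delta(M)$ by definition of $\mathrm{Rad}_\delta$, we get $mR + \mathrm{Rad}_\delta(M) = N + \mathrm{Rad}_\delta(M)$, so $\overline{mR} = \overline{N}$. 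Thus it suffices to show $\overline{N}$ is a direct summand of $\overline{M}$.

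For that, I would prove the decomposition $\overline{M} = \overline{N}\oplus \overline{N'}$. The sum side is immediate from $M = N + N'$. For the disjointness, the key observation is that $\mathrm{Rad}_\delta(M)$ is a fully invariant submodule of $M$: indeed, by Lemma \ref{ilk}(2) every endomorphism of $M$ sends a $\delta$-small submodule to a $\delta$-small submodule, so the sum $\mathrm{Rad}_\delta(M)$ of all $\delta$-small submodules is preserved by every endomorphism. Applying Lemma \ref{kes} to the decomposition $M = N \oplus N'$ then gives
\[
\mathrm{Rad}_\delta(M) \;=\; \bigl(\mathrm{Rad}_\delta(M)\cap N\bigr) \;\oplus\; \bigl(\mathrm{Rad}_\delta(M)\cap N'\bigr).
\]

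Using this decomposition, suppose $\overline{n} = \overline{n'}$ for some $n \in N$, $n' \in N'$. Then $n - n' \in \mathrm{Rad}_\delta(M)$, and by the displayed decomposition we can write $n - n' = a + b$ with $a \in \mathrm{Rad}_\delta(M)\cap N$ and $b \in \mathrm{Rad}_\delta(M)\cap N'$. Rewriting as $n - a = n' + b \in N \cap N' = 0$ yields $n = a \in \mathrm{Rad}_\delta(M)$, hence $\overline{n} = 0$. Thus $\overline{N}\cap \overline{N'} = 0$, and $\overline{M} = \overline{N}\oplus \overline{N'}$, so $\overline{mR} = \overline{N}$ is a direct summand of $\overline{M}$. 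Since $m$ was arbitrary, every cyclic submodule of $\overline{M}$ is a direct summand, i.e. $\overline{M}$ is principally semisimple.

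The only non-routine step is the fact that $\mathrm{Rad}_\delta(M)$ splits across a direct sum decomposition; this is where Lemma \ref{kes} together with the full invariance of $\mathrm{Rad}_\delta(M)$ (via Lemma \ref{ilk}(2)) carries the weight of the argument. Everything else is a direct application of Theorem \ref{guz}(2) and elementary manipulation of the quotient.
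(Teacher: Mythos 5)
Your argument is correct and follows essentially the same route as the paper's: extract the decomposition $M = M_1\oplus M_2$ with $M_1\le mR$ and $mR\cap M_2$ $\delta$-small (your $N\oplus S$ from Theorem \ref{guz}(2) is the same data), observe that modulo $\mathrm{Rad}_\delta(M)$ the cyclic submodule equals $\overline{M_1}$, and conclude that $\overline{M}$ splits accordingly. The one place you add rigor is that the paper simply asserts $(mR+\mathrm{Rad}_\delta(M))\cap(M_2+\mathrm{Rad}_\delta(M))=\mathrm{Rad}_\delta(M)$ without justification, whereas you correctly supply it via the full invariance of $\mathrm{Rad}_\delta(M)$ (which follows from Lemma \ref{ilk}(2)) together with Lemma \ref{kes}.
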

\begin{proof} Let $m\in M$. There exists $M_1\leq mR$ such that $M = M_1\oplus M_2$ and
$(mR)\cap M_2$ is $\delta$-small in $M_2$. So$(mR)\cap M_2$ is
$\delta$-small in $M$.  Then
$$M/ {\rm Rad}_{\delta}(M) = [(mR+
{\rm Rad}_{\delta}(M))/ {\rm Rad}_{\delta}(M)] \oplus [(M_2 + {\rm
Rad}_{\delta}(M))/ {\rm Rad}_{\delta}(M)]$$  because $(mR +$ {\rm
Rad}$_{\delta}(M)) \cap (M_2 + $Rad$_{\delta}(M)) =
$Rad$_{\delta}(M)$. Hence every principal submodule of
$M/$Rad$_{\delta}(M)$ is a direct summand.
\end{proof}

\begin{prop} Let $M$ be a principally $\delta$-lifting module. Then $M=M_{1}\oplus M_{2}$,
where $M_{1}$ is a principally semisimple module and $M_{2}$ is a
module with {\rm Rad}$_{\delta}(M)$ essential in $M_{2}$.
\end{prop}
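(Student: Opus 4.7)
Let $L := \mathrm{Rad}_\delta(M)$. The plan is to extract a direct summand $M_1$ of $M$ that is maximal subject to $M_1 \cap L = 0$; writing $M = M_1 \oplus M_2$, one then shows $M_1$ is principally semisimple and $L$ is essential in $M_2$.

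The preparatory observation is that $L$ is fully invariant in $M$: by Lemma \ref{ilk}(2) every endomorphism of $M$ sends a $\delta$-small submodule to a $\delta$-small submodule, so the sum $L$ of all such submodules is $\mathrm{End}(M)$-stable. Combined with Lemma \ref{kes}, whenever $M = N \oplus N'$ we obtain $L = (L \cap N) \oplus (L \cap N')$; in particular, $N \cap L = 0$ forces $L \leq N'$. A Zorn's lemma argument then supplies a direct summand $M_1$ of $M$ maximal subject to $M_1 \cap L = 0$, and the above observation automatically gives $L \leq M_2$ in any complementary decomposition $M = M_1 \oplus M_2$.

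To verify that $M_1$ is principally semisimple, pick $m \in M_1$ and apply Theorem \ref{guz}(2) to decompose $mR = N \oplus S$ with $N$ a direct summand of $M$ and $S$ $\delta$-small in $M$. Since $S \leq mR \cap L \leq M_1 \cap L = 0$, we get $mR = N$, a direct summand of $M$, and the modular law upgrades this to a direct summand of $M_1$. For the essentiality of $L$ in $M_2$, suppose $K \leq M_2$ is a nonzero submodule with $K \cap L = 0$ and pick any $0 \neq k \in K$; applying Theorem \ref{guz}(2) to $kR$ forces $kR$ to be a direct summand of $M$, since its $\delta$-small part lies in $K \cap L = 0$. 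As $kR \leq M_2$, this makes $kR$ a direct summand of $M_2$, so $M_1 \oplus kR$ is a direct summand of $M$ properly containing $M_1$ and still meeting $L$ trivially (once more by the full invariance of $L$), contradicting the maximality of $M_1$.

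The subtle point is the Zorn's step: ascending chains of direct summands need not have direct-summand unions, so verifying the inductiveness of the relevant family is where the principal $\delta$-lifting hypothesis must really be used. A workable route is to apply Zorn to the collection of independent families $\{m_\alpha R\}$ of cyclic direct summands of $M$ with $m_\alpha R \cap L = 0$, and then to argue via Theorem \ref{guz}(2) that a maximal such family assembles into a single direct summand of $M$. The rest of the argument, as sketched above, is formal.
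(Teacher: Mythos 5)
Your overall strategy — decompose $M = M_1 \oplus M_2$, use full invariance of $L = \mathrm{Rad}_\delta(M)$ together with Lemma \ref{kes} to push $L$ into the second factor, kill the $\delta$-small part of $mR$ against $M_1 \cap L = 0$ to get principal semisimplicity, and use a maximality argument for the essentiality — matches the spirit of the paper. However, you are right that the Zorn step is the crux, and your proposed repair does not actually close it. You take Zorn over independent families $\{m_\alpha R\}$ of cyclic direct summands with $m_\alpha R \cap L = 0$, and then assert that a maximal such family ``assembles into a single direct summand of $M$'' via Theorem \ref{guz}(2). But Theorem \ref{guz}(2) is a statement about a single cyclic submodule; it gives you nothing about an infinite independent sum. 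In general an infinite independent family of direct summands need not sum to a direct summand (e.g.\ $\bigoplus_{\mathbb N}\mathbb Z$ inside $\prod_{\mathbb N}\mathbb Z$), and nothing in the principally $\delta$-lifting hypothesis obviously rules this out. So the sentence ``assembles into a single direct summand'' is precisely the missing idea, and as written your argument does not supply it.

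For comparison, the paper's own proof takes a different starting point: it chooses $M_1$ to be a \emph{complement} of $L$ (so $L \oplus M_1$ essential in $M$), and then shows for each $m \in M_1$ that $mR$ is a direct summand of $M$, hence $M_1$ is principally semisimple. It too never justifies that this $M_1$ is a direct summand of $M$ (a complement of $L$ is a submodule, not automatically a summand), nor does it exhibit a global $M_2$; the $M_2, M_2'$ in its proof are local to a fixed $m$. So you have put your finger on a genuine soft spot shared by both arguments; your version is more explicit about the difficulty, but your proposed route through Zorn on cyclic families does not resolve it, and a different idea (or an extra hypothesis guaranteeing that sums/unions of suitable direct summands remain summands) would be needed to make either proof airtight.
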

\begin{proof} Let $M_{1}$ be a submodule of $M$ such that Rad$_{\delta}(M)\oplus M_{1}$ is essential in $M$ and  $m\in M_1$. Since $M$ is
principally $\delta$-lifting, there exists a direct summand
$M_{2}$ of $M$ such that $M_2\leq mR$, $M = M_2 \oplus M'_{2}$ and
$mR\cap M'_{2}$ is $\delta$-small in $M$. Hence $mR\cap M'_{2}$ is
a submodule of Rad$_{\delta}(M)$ and so $mR\cap M'_{2} = 0$. Then
$m\in M_2$ and $mR = M_2$. Since $M_2 \cap$ Rad$_{\delta}(M) = 0$,
$M_2$ is isomorphic to a submodule of $M/$Rad$_{\delta}(M)$. By
Lemma \ref{ss},  $M/$Rad$_{\delta}(M)$ is principally semisimple,
$M_2$ is principally semisimple. On the other hand,
Rad$_{\delta}(M) = $Rad$_{\delta}(M'_2)$ is essential in $M_2$
that it is clear from the construction of $M'_2$.
\end{proof}

 A nonzero
module $M$ is called {\it $\delta$-hollow} if every proper
submodule is $\delta$-small in $M$, and $M$ is {\it principally
$\delta$-hollow} if every proper cyclic submodule is
$\delta$-small in $M$, and $M$ is {\it finitely $\delta$-hollow}
if every proper finitely generated submodule is $\delta$-small in
$M$. Since finite direct sum of $\delta$-small submodules is
$\delta$-small, $M$ is principally $\delta$-hollow if and only if
it is finitely $\delta$-hollow.

\begin{lem} The following are equivalent for an indecomposable module
$M$.
\begin{enumerate}
\item  $M$ is a principally $\delta$-lifting module.
\item $M$ is a principally $\delta$-hollow module.
\end{enumerate}
\end{lem}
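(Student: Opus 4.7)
The plan is to prove both implications directly from the definitions, exploiting indecomposability in one direction and the trivial decomposition $M = 0 \oplus M$ in the other.

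For $(1) \Rightarrow (2)$, I would pick any proper cyclic submodule $mR$ of $M$ and apply the principally $\delta$-lifting property to obtain a decomposition $M = A \oplus B$ with $A \leq mR$ and $mR \cap B$ is $\delta$-small in $B$. Since $M$ is indecomposable, $A$ is either $0$ or $M$. The case $A = M$ forces $mR = M$, contradicting properness of $mR$, so $A = 0$ and $B = M$. Then $mR = mR \cap M = mR \cap B$ is $\delta$-small in $M$, as required.

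For $(2) \Rightarrow (1)$, I would take an arbitrary $m \in M$ and split on whether $mR = M$ or $mR$ is proper. If $mR = M$, use the trivial decomposition $M = M \oplus 0$ with $A = M \leq mR$ and $mR \cap 0 = 0$, which is vacuously $\delta$-small. If $mR$ is proper, use the trivial decomposition $M = 0 \oplus M$ with $A = 0 \leq mR$; then $mR \cap M = mR$, which is $\delta$-small in $M$ by the principally $\delta$-hollow hypothesis, hence $\delta$-small in the summand $B = M$ (this is exactly the equivalence noted in the definition of principally $\delta$-lifting, and follows from Lemma \ref{ilk}(2)).

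There is no real obstacle here: the argument is essentially a restatement of the two definitions, with indecomposability used only to collapse the possible direct summands $A \leq mR$ to $0$ or $M$. The only minor point to handle cleanly is the case $mR = M$ in the backward direction, which is why a split into two cases is convenient.
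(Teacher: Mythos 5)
Your proof is correct and follows essentially the same route as the paper: in $(1)\Rightarrow(2)$ indecomposability forces the summand inside $mR$ to be $0$, and in $(2)\Rightarrow(1)$ the trivial decomposition $M=0\oplus M$ works because $mR$ itself is $\delta$-small. You are slightly more careful than the paper in explicitly separating the case $mR=M$ (which the paper's wording glosses over), but this is a refinement of the same argument rather than a different approach.
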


\begin{proof} (1)$\Rightarrow $(2) Let $m\in M$.  Since $M$  is a principally $\delta$-lifting module,
there exist $N$ and $S$ submodules of  $M$   such that $N
\leq mR$,  $mR \cap S$ is $\delta$-small in  $M$ and $M=N \oplus
S$. By hypothesis,  $N=0$ and $S=M$. So
that $ mR \cap S =mR$ is $\delta$-small in  $M$.\\
(2)$\Rightarrow $(1) Let $m\in M$.  Then  $mR = (mR) \oplus (0)$. By (2) $mR$ is $\delta$-small and
$(0)$ is direct summand in $M$. Hence $M$ is a principally $\delta$-lifting module.
\end{proof}

\begin{lem}\label{uclu} Let $M$ be a module, then we have
\begin{enumerate}
\item  If $M$ is principally $\delta$-hollow, then every factor module is principally $\delta$-hollow.
\item  If $K$ is $\delta$-small submodule  of $M$ and $M/K$ is principally $\delta$-hollow, then $M$ is principally $\delta$-hollow.
\item  $M$ is principally $\delta$-hollow if and only if $M$ is
local or {\rm Rad}$_{\delta}(M) = M$.
\end{enumerate}
\end{lem}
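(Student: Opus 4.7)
I plan to handle the three parts in sequence, leaning on Lemma~\ref{ilk} throughout.

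For (1), let $N\leq M$ and take a proper cyclic submodule $(mR+N)/N$ of $M/N$. Properness gives $mR+N\neq M$, so $mR\neq M$ and by hypothesis $mR$ is $\delta$-small in $M$. Lemma~\ref{ilk}(2) then says the image $(mR+N)/N$ of $mR$ under the canonical projection $M\to M/N$ is $\delta$-small in $M/N$, which is what we need.

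For (2), let $mR$ be a proper cyclic submodule of $M$ and aim to show it is $\delta$-small in $M$. Consider the image $(mR+K)/K$ in $M/K$. If it is proper, it is $\delta$-small in $M/K$ by hypothesis: given $M=(mR+K)+X$ with $M/X$ singular, reducing modulo $K$ and using $\delta$-smallness of $(mR+K)/K$ forces $M=X+K$, and then $\delta$-smallness of $K$ together with singularity of $M/X$ gives $M=X$, so $mR+K$ is $\delta$-small in $M$ and Lemma~\ref{ilk}(4) promotes this to $\delta$-smallness of $mR$. In the remaining case $M=mR+K$, Lemma~\ref{ilk}(1) supplies a decomposition $M=mR\oplus Y$ with $Y\subseteq K$ projective semisimple; if $M=mR+X$ with $M/X$ singular, then $Y/(Y\cap X)$ injects into the singular $M/X$ while being a direct summand of the projective semisimple $Y$ (hence non-singular), forcing $Y\subseteq X$, from which $M=X$ is extracted.

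For (3), I treat each direction. For $(\Leftarrow)$: if $M$ is local with unique maximal submodule $L$, any proper cyclic $mR$ lies in $L$, and if $M=mR+X$ with $M/X$ singular then $M=L+X$, so maximality of $L$ forces $X=M$ and $mR$ is $\delta$-small. If ${\rm Rad}_\delta(M)=M$, each element is expressible as a finite sum of elements from $\delta$-small submodules, so every cyclic $mR$ is contained in a finite sum of $\delta$-small submodules, itself $\delta$-small (as finite sums of $\delta$-small submodules are $\delta$-small, noted in the paragraph introducing principally $\delta$-hollow modules), and Lemma~\ref{ilk}(4) concludes. For $(\Rightarrow)$: if $M$ has no maximal submodule, every cyclic submodule is proper hence $\delta$-small, so $\sum_{m\in M}mR=M\subseteq{\rm Rad}_\delta(M)$ gives equality. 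If $M$ has a maximal submodule, any two distinct maximal submodules $L_1,L_2$ satisfy $L_1+L_2=M$; picking a cyclic submodule whose generator lies outside $L_1$ would then produce a proper cyclic submodule that is not $\delta$-small (by the argument used for $(\Leftarrow)$ read in reverse), contradicting the hypothesis, so the maximal submodule is unique and $M$ is local.

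The principal obstacle is the case $M=mR+K$ in part (2): the clean lift-and-combine argument fails there because $(mR+K)/K=M/K$ is no longer a proper submodule and carries no hypothesis, so one must exploit the structural decomposition of Lemma~\ref{ilk}(1) together with the non-singularity of projective semisimple modules to close the gap.
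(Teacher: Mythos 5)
Your proof of part (1) is correct and uses Lemma~\ref{ilk}(2) to get the conclusion in one stroke; the paper instead unwinds the definition of $\delta$-small directly, but the two are essentially the same argument.

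Parts (2) and (3) have genuine gaps. In (2), you are right that the case $M = mR + K$ needs separate treatment (the paper silently assumes $(mR+K)/K$ is a \emph{proper} cyclic submodule of $M/K$, so its proof skips this case entirely), but your handling of it does not close. From $M = mR \oplus Y$ and $Y \subseteq X$ you conclude $M = X$; this does not follow. Modularity gives $X = (X\cap mR)\oplus Y$, but there is no reason $mR$ should sit inside $X$. Concretely, if $S_1$ is a singular simple module, $S_2$ a projective simple module, $M = S_1\oplus S_2$, $K = S_2$, $mR = S_1$ and $X = Y = S_2$, then all your hypotheses hold ($K$ is $\delta$-small, $M/K\cong S_1$ is principally $\delta$-hollow, $M = mR + K$, $M = mR + X$, $M/X\cong S_1$ singular, $Y\subseteq X$) yet $M\ne X$ and $mR = S_1$ is not $\delta$-small. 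So the ``extraction'' step is not merely unjustified but false in general, and this example shows the difficulty is real and not patchable by the route you chose.

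In (3), the $(\Rightarrow)$ direction relies on the claim that a generator $k$ outside a maximal submodule $L_1$ gives a proper cyclic $kR$ that is \emph{not} $\delta$-small. That follows only if $M/L_1$ is singular. Since $M/L_1$ is simple, the correct dichotomy is that $M/L_1$ is singular \emph{or projective}, and in the projective case nothing stops $kR$ from being $\delta$-small. The paper's proof explicitly uses this dichotomy: when $M/N$ is projective, $N$ is a direct summand, $M = N\oplus N'$ with both summands proper, and then every element of a proper submodule generates a proper cyclic (hence $\delta$-small) submodule, which forces ${\rm Rad}_\delta(M) = M$. Your proposed dichotomy (``has a maximal submodule $\Rightarrow$ local'') is also not the statement being proved and is in fact too strong: $M = S_1\oplus S_2$ with $S_1, S_2$ non-isomorphic projective simples is principally $\delta$-hollow, has two maximal submodules, is not local, and satisfies ${\rm Rad}_\delta(M) = M$. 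You need to follow the paper's route: assume $M$ not local, handle the ``no maximal submodule'' case by non-cyclicity, and in the ``two maximals'' case split on singular versus projective quotient rather than trying to rule the second case out.
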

\begin{proof} (1) Assume that $M$ is principally $\delta$-hollow and $N$  a submodule of $M$. Let $m + N\in M/N$ and $(mR + N)/N + K/N = M/N$.
Suppose
that $M/K$ is singular. We have $mR + K = M$. Since $M/K$ is singular  and $M$ is principally $\delta$-hollow, $M = K$.\\
(2) Let $m\in M$. Assume that $mR + N = M$ for some submodule $N$
with $M/N$ singular. Then $(m+K)R = (mR + K)/K$ is a cyclic
submodule of $M/K$ and $(mR + K)/K + (N + K)/K = M/K$ and $M/(N +
K)$ is singular as an homomorphic image of $M/N$. Hence $(N + K)/K
= M/K$ or $N + K = M$. By hypothesis $N =
M$.\\
(3) Suppose that $M$ is  principally $\delta$-hollow and it is not
local. Let $N$ and $K$ be two distinct maximal submodules of $M$
and $k\in K\setminus N$. Then $M = kR + N$ and $M/N$ is a simple
module, and so  $M/N$ is a singular or projective module. If $M/N$ is
singular, then $M = N$ since $kR$ is $\delta$-small. But this is
not possible since $N$ is maximal. So $M/N$ is projective. Hence
$N$ is direct summand. So $M = N\oplus N'$ for some nonzero submodule $N'$ of $M$, that is,  $N$ and $kR$ are proper submodules of $M$.
Since every proper
submodule of $M$ is contained in Rad$_{\delta}(M)$, $M$ = {\rm
Rad}$_{\delta}(M)$. The converse is clear.
\end{proof}
%Let $M$ be module. $M$ is said to be $\delta$-principally-hollow if every cyclic submodule is $\delta$-small.
\begin{prop}\label{iki} Let $M$ be a module. Then the following are equivalent.
\begin{enumerate}
\item   $M$ is principally $\delta$-hollow.
\item If  $N$ is submodule with $M/N$ cyclic, then $N$ is a
$\delta$-small submodule of $M$.
\end{enumerate}
\end{prop}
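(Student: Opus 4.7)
The plan is to prove each implication of the stated equivalence separately.

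For $(1) \Rightarrow (2)$, assume $M$ is principally $\delta$-hollow and $N \leq M$ satisfies $M/N$ cyclic. Write $M = mR + N$ for some $m \in M$. To verify $N$ is $\delta$-small, take any $X \leq M$ with $N + X = M$ and $M/X$ singular; the goal is to conclude $X = M$. Choosing $n \in N$, $x \in X$ with $m = n + x$ gives the key reduction $M = xR + N$ together with $xR \subseteq X$. If $xR = M$ we are done immediately; otherwise $xR$ is a proper cyclic submodule, hence $\delta$-small by (1), and Lemma \ref{ilk}(1) produces a decomposition $M = N \oplus Y$ with $Y \subseteq xR \subseteq X$ projective semisimple. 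Since $Y \cong M/N$ is cyclic, $Y$ is a finite direct sum of simple projective (in particular, non-singular) submodules. The modular law then gives $X = (X \cap N) \oplus Y$ and $M/X \cong N/(X \cap N)$ is singular.

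To complete the argument, I would appeal to Lemma \ref{uclu}(3) applied to $M$. In the subcase $\mathrm{Rad}_{\delta}(M) = M$, the module $M$ has no maximal submodule with singular quotient; any maximal submodule $\bar{L}$ of the singular quotient $N/(X \cap N)$ would pull back to a maximal $L \leq N$ with $N/L$ simple singular, whence $L \oplus Y$ would be maximal in $M$ with simple singular quotient---a contradiction. In the subcase where $M$ is local, a direct analysis using the maximality of the unique maximal submodule, the decomposition $M = N \oplus Y$, and the non-singularity of $Y$ closes the argument. Either way $X \cap N = N$, hence $X = M$.

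For $(2) \Rightarrow (1)$, I would argue by contraposition. Suppose some proper cyclic $mR$ is not $\delta$-small; by Lemma \ref{iki1} there is a maximal $K \leq M$ with $m \notin K$ and $M/K$ singular. Since $M/K$ is simple and hence cyclic, (2) gives that $K$ is $\delta$-small. Maximality of $K$ yields $M = mR + K$, and Lemma \ref{ilk}(1) produces $M = mR \oplus Y$ with $Y \subseteq K$ projective semisimple. Because $M/Y \cong mR$ is cyclic, a second application of (2) shows $Y$ is $\delta$-small. Combining the $\delta$-smallness of $K$ and $Y$ with the simple singular structure of $M/K$ then produces a contradiction, forcing $mR$ to be $\delta$-small after all.

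The main obstacle I anticipate is the very last step of the forward direction: establishing $X \cap N = N$ requires ruling out the possibility that $N/(X \cap N)$ is a nonzero singular module with no maximal submodule (a Pr\"ufer-type pathology). The case analysis based on Lemma \ref{uclu}(3) handles the situation cleanly whenever such a maximal submodule exists; the delicate part is using the cyclicity of $M/N$ and the structure of $Y$ as a finite direct sum of simple projective modules to rule out the no-maximal-submodule branch.
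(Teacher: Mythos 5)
Your two directions both run into trouble, and in each case the missing idea is one the paper actually supplies.

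For $(1)\Rightarrow(2)$ the paper takes a different and shorter route than yours: it passes to the quotient $M/N$, observes that $M/N$ is principally $\delta$-hollow (Lemma~\ref{uclu}(1)) and cyclic, hence by Lemma~\ref{uclu}(3) local, and concludes $N$ is small and therefore $\delta$-small. Your route via Lemma~\ref{ilk}(1) and the decomposition $M=N\oplus Y$, $X=(X\cap N)\oplus Y$ is not wrong as far as it goes, but as you yourself note it stalls exactly where it needs to close: you cannot rule out that $N/(X\cap N)\cong M/X$ is a nonzero singular module without maximal submodules, and your ``direct analysis'' in the $M$ local subcase is not carried out (in that subcase $M$ is indecomposable, so $Y=0$ forces $M=N$ and the question becomes whether $M$ itself is $\delta$-small in $M$ --- which still needs an argument). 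The lever that the paper pulls and you don't is to work in $M/N$ and invoke locality there, rather than trying to control $N/(X\cap N)$ directly.

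For $(2)\Rightarrow(1)$ you correctly set up $M=mR\oplus Y$ with $Y\subseteq K$ projective semisimple, and you correctly deduce from $(2)$ that both $K$ and $Y$ are $\delta$-small. But the asserted ``contradiction'' is not there yet: $\delta$-smallness of $K$ does not force $M=mR$ because $M/mR\cong Y$ is projective rather than singular, and $\delta$-smallness of $Y$ alone says nothing about $mR$. The step you are missing is precisely the one the paper performs: write $Y=\bigoplus_{i} N_i$ with each $N_i$ simple projective (nonempty since $mR$ is proper), fix one $i$, and apply $(2)$ to the submodule $mR\oplus\bigoplus_{j\neq i}N_j$, whose quotient is $\cong N_i$ and hence cyclic. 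That submodule is then $\delta$-small, and since it contains $mR$, Lemma~\ref{ilk}(4) gives that $mR$ is $\delta$-small --- the contradiction. Without isolating a simple summand of $Y$ and feeding its complement back into hypothesis $(2)$, the argument does not close. So while your starting decomposition matches the paper's, the decisive application of $(2)$ to $mR\oplus\bigoplus_{j\neq i}N_j$ is absent, and that is a genuine gap rather than a detail.
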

\begin{proof} (1) $\Rightarrow$ (2) Assume that $N$ is a submodule with $M/N$ cyclic. Lemma \ref{ilk}  implies that $M/N$ is principally
$\delta$-hollow  since being $\delta$-small is preserved under
homomorphisms.  Since $M/N$ has maximal submodules, and by Lemma
\ref{uclu},  $M/N$ is local. There exists a unique maximal submodule $N_1$ containing $N$. Hence $N$ is small,  therefore it is $\delta$-small.\\
(2) $\Rightarrow$ (1) We prove that every cyclic submodule is
$\delta$-small in $M$. So let $m\in M$ and $M = mR + N$ with
$M/N$ singular. Then $M/N$ is cyclic. By hypothesis,   $N$ is $\delta$-small submodule of $M$.  By Lemma \ref{ilk},  there exists a projective
semisimple submodule $Y$ of $N$ such that $M = (mR)\oplus Y$.  Let  $Y = \bigoplus\limits_{i\in I} N_i$ where each $N_i$  is simple. Now
we write $M = ((mR) \bigoplus \limits_{i\neq  j} N_j)\oplus N_i$. Then  $M / ((mR) \bigoplus\limits_{i\neq  j} N_j)$ is cyclic module as it is isomorphic to simple module $N_i$.
By hypothesis,    $((mR) \bigoplus\limits_{i\neq  j} N_j)$ is $\delta$-small in $M$.  Again by Lemma \ref{ilk},  there exists a projective semisimple submodule $Z$ of
 $((mR) \bigoplus\limits_{i\neq  j} N_j)$ such that $M = Z\oplus N_i$.  Hence $M$ is  projective semisimple module.  So $M = N\oplus N'$ for some
submodule $N'$. Then $N'$ is projective.  $M/N$ is projective as it is isomorphic to $N'$.  Hence $M/N$ is both singular and projective module. Thus
$M = N$.
\end{proof}
%%%%%%%%%%%%%%%%%%%%%%%%%%%%%%%%%%%
\section{Applications}
In this section,   we introduce and study some properties of principally $\delta$-semiperfect modules. By \cite{Zh}, a projective module $P$ is
called {\it a projective $\delta$-cover } of a module $M$ if there exists an epimorphism $f: P\longrightarrow M$ with $Kerf$ is $\delta$-small in
$P$, and a ring is called {\it $\delta$-perfect} (or {\it $\delta$-semiperfect}) if every $R$-module (or every simple $R$-module) has a  projective
$\delta$-cover. For more detailed discussion on $\delta$-small submodules, $\delta$-perfect and $\delta$-semiperfect rings, we refer to \cite{Zh}. A
module $M$ is called {\it principally $\delta$-semiperfect} if every factor module of $M$ by a cyclic submodule has a projective $\delta$-cover. A
ring $R$ is called {\it principally $\delta$-semiperfect} in case the right $R$-module $R$ is principally $\delta$-semiperfect. Every $\delta$-semiperfect module is
 principally $\delta$-semiperfect. In \cite{Zh},  a ring
$R$ is called {\it $\delta$-semiregular} if every cyclically presented R-module has a projective $\delta$-cover.
\begin{thm}\label{belli} Let $M$ be a projective module. Then the following are equivalent.
\begin{enumerate}
\item $M$ is principally $\delta$-semiperfect.
\item $M$ is principally $\delta$-lifting.
\end{enumerate}
\end{thm}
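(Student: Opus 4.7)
The plan is to prove each implication separately, with the forward direction being the substantive one.

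\smallskip

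For $(2)\Rightarrow (1)$: Given $m\in M$, apply the principally $\delta$-lifting property to obtain a decomposition $M=A\oplus B$ with $A\leq mR$ and $mR\cap B$ $\delta$-small in $B$. Since $A\leq mR$ we have $M=mR+B$, so the second isomorphism theorem gives $M/mR\cong B/(mR\cap B)$. Because $B$ is a direct summand of the projective module $M$, it is itself projective, and the canonical surjection $B\to B/(mR\cap B)$ has kernel $mR\cap B$ which is $\delta$-small in $B$. Composing with the isomorphism produces a projective $\delta$-cover of $M/mR$.

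\smallskip

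For $(1)\Rightarrow (2)$: Fix $m\in M$, let $\pi:M\to M/mR$ be the natural map, and let $f:P\to M/mR$ be a projective $\delta$-cover supplied by hypothesis. Use projectivity of $M$ to lift $\pi$ through $f$, obtaining $h:M\to P$ with $fh=\pi$. Two observations are essential: first, $\ker h\subseteq \ker\pi=mR$; second, $P=h(M)+\ker f$ since $f(h(M))=\pi(M)=M/mR$. Because $\ker f$ is $\delta$-small in $P$, Lemma \ref{ilk}(1) yields a decomposition $P=h(M)\oplus Y$ with $Y\leq\ker f$ projective and semisimple. In particular $h(M)$ is a direct summand of $P$, hence projective, so the surjection $h:M\twoheadrightarrow h(M)$ splits: $M=\ker h\oplus M'$ with $h|_{M'}:M'\xrightarrow{\sim} h(M)$ an isomorphism. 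Setting the direct summand $\ker h$ as our desired $A\leq mR$, it remains to show that $mR\cap M'$ is $\delta$-small in $M'$.

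\smallskip

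The concluding step is where the care is needed. From $\ker h\leq mR$ and $M=\ker h\oplus M'$ the modular law gives $mR=\ker h\oplus(mR\cap M')$, so $h(mR\cap M')=h(mR)$. One checks $h(mR)=\ker f\cap h(M)$: the inclusion $\subseteq$ follows from $fh=\pi$ and $\pi(mR)=0$; conversely if $y=h(x)\in\ker f\cap h(M)$ then $\pi(x)=f(y)=0$ forces $x\in mR$. Now $\ker f$ is $\delta$-small in $P=h(M)\oplus Y$, so the projection $P\to h(M)$ sends $\ker f$ to a $\delta$-small submodule of $h(M)$ by Lemma \ref{ilk}(2); since $\ker f\cap h(M)$ is contained in this image, Lemma \ref{ilk}(4) shows $\ker f\cap h(M)$ is $\delta$-small in $h(M)$. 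Transporting this across the isomorphism $(h|_{M'})^{-1}$, again by Lemma \ref{ilk}(2), we conclude $mR\cap M'$ is $\delta$-small in $M'$, completing the verification.

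\smallskip

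I expect the main obstacle to be the final $\delta$-smallness transfer: one must be careful not to conflate $\delta$-smallness in $P$ with $\delta$-smallness in the summand $h(M)$, and this is precisely where the combination of Lemma \ref{ilk}(2) (for the projection) and Lemma \ref{ilk}(4) (for passing to a smaller submodule) is indispensable.
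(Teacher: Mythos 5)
Your proof is correct and takes essentially the same route as the paper's: for $(1)\Rightarrow(2)$ both lift $\pi$ through the projective $\delta$-cover, split off a semisimple summand via Lemma~\ref{ilk}(1), split $M$ using projectivity of the image, and carry $\delta$-smallness back across the resulting isomorphism; for $(2)\Rightarrow(1)$ both take the $\delta$-supplement itself as the cover. Your final transfer step --- passing from $\delta$-smallness of $\ker f$ in $P$ to $\delta$-smallness of $\ker f\cap h(M)$ in the summand $h(M)$ via Lemma~\ref{ilk}(2) and (4) before applying $(h|_{M'})^{-1}$ --- is spelled out more carefully than the paper's terse appeal to ``$g^{-1}$ is an isomorphism,'' but the underlying argument is the same.
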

\begin{proof} {\rm (1)}$\Rightarrow$ {\rm (2)} Let $m\in M$ and $P  \stackrel{f}\rightarrow M/mR$ be a projective $\delta$-cover and \linebreak
$ M  \stackrel{\pi}\rightarrow M/mR$ the natural epimorphism.

\[\begin{diagram}
\node[2]{M}\arrow{sw,t,..}g\arrow{s,r}\pi \\
\node{P}\arrow{e,t}f\node{M/mR}\arrow{e}\node{0}
\end{diagram}\]
Then there exists a map $M  \stackrel{g}\rightarrow P$ such that
$fg = \pi$. Then $P = g(M) + $Ker$(f)$. Since Ker$(f)$ is
$\delta$-small, by Lemma \ref{ilk}, there exists a projective
semisimple submodule $Y$ of Ker$(f)$ such that $P = g(M)\oplus Y$.
So $g(M)$ is projective. Hence $M = K\oplus Ker(g)$ for some
submodule $K$ of $M$. It is easy to see that $g(K\cap mR) =
g(K)\cap $Ker$(f)$ and Ker$(g)\leq mR$. Hence $M = K + mR$. Next
we prove $K\cap (mR)$ is $\delta$-small in $K$. Since Ker$(f)$ is
$\delta$-small in $P$, $g(K)\cap $Ker$(f)= g(K\cap mR)$ is
$\delta$-small in $P$ by Lemma \ref{ilk} (4). Hence $K\cap (mR)$
is $\delta$-small in $K$ since $g^{-1}$ is an isomorphism from
$g(M)$ onto $K$.\\
{\rm (2)}$\Rightarrow$ {\rm (1)} Assume that $M$ is a principally
$\delta$-lifting module. Let $m\in M$. There exist direct summands
$N$ and $K$  of $M$ such that $M = N\oplus K$, $N\leq mR$ and
$mR\cap K$ is $\delta$-small in $K$. Let
$K\stackrel{\pi}\rightarrow M/mR$ denote the natural epimorphism
defined by $\pi(k) = k + mR$ where $k\in K$, $k + mR\in M/mR$. It
is obvious that \linebreak Ker$(\pi)$ = $mR\cap K$. It follows that $K$ is
projective $\delta$-cover of $M/mR$. So $M$ is principally
$\delta$-semiperfect.
\end{proof}

\begin{cor}  Let $R$ be a ring. Then the following are equivalent.
\begin{enumerate}\item $R$ is principally $\delta$-semiperfect.
\item $R$ is principally $\delta$-lifting.
\item $R$ is $\delta$-semiregular.
\end{enumerate}
\end{cor}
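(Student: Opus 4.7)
The plan is to reduce everything to Theorem \ref{belli} and an unpacking of definitions, observing that the right regular module $R_R$ is projective.

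First, I would apply Theorem \ref{belli} to the projective module $M = R_R$. This immediately gives the equivalence $(1) \Leftrightarrow (2)$: the theorem states that for any projective module, being principally $\delta$-semiperfect is equivalent to being principally $\delta$-lifting, and by definition the ring $R$ is principally $\delta$-semiperfect (resp.\ principally $\delta$-lifting) precisely when $R_R$ is such a module.

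Second, for $(1) \Leftrightarrow (3)$, I would simply compare definitions. By the definition given just before Theorem \ref{belli}, $R$ is principally $\delta$-semiperfect if and only if for every $m \in R$, the factor module $R/mR$ admits a projective $\delta$-cover. On the other hand, $R$ is $\delta$-semiregular if every cyclically presented $R$-module admits a projective $\delta$-cover, and a cyclically presented right $R$-module is precisely one of the form $R/aR$ with $a \in R$. Hence conditions (1) and (3) are verbatim the same statement.

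There is no real obstacle here; the only subtlety is recognizing that the notion \emph{cyclically presented} coincides with \emph{factor of $R_R$ by a cyclic submodule}, so that the principally $\delta$-semiperfect condition specialized to the ring $R$ is literally the $\delta$-semiregular condition of \cite{Zh}. Combined with the projectivity of $R_R$, which lets us invoke Theorem \ref{belli} directly, the corollary follows with essentially no computation.
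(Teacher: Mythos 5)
Your equivalence $(1)\Leftrightarrow(2)$ is handled exactly as in the paper: $R_R$ is projective, so Theorem \ref{belli} applies directly.

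For the third condition you take a genuinely different route. The paper proves $(2)\Leftrightarrow(3)$: it invokes Theorem \ref{guz}(2) to characterize $R$ being principally $\delta$-lifting as every principal right ideal decomposing as $I=N\oplus S$ with $N$ a direct summand of $R$ and $S$ $\delta$-small, and then matches this against Zhou's decomposition-type characterization of $\delta$-semiregular rings (\cite[Theorem 3.5]{Zh}), using the facts that ${\rm Rad}_\delta(R)$ is $\delta$-small and that submodules of $\delta$-small submodules are $\delta$-small. You instead observe that $(1)\Leftrightarrow(3)$ is an unpacking of definitions: specializing ``principally $\delta$-semiperfect'' to $M=R_R$ gives exactly ``for each $a\in R$, $R/aR$ has a projective $\delta$-cover,'' which is the cyclically-presented condition that the present paper takes as the meaning of $\delta$-semiregular. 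This is correct and is arguably cleaner, since it avoids routing through Theorem \ref{guz} and Zhou's equivalent characterizations. The one caveat worth flagging is that your argument is only ``verbatim the same statement'' because this paper introduces $\delta$-semiregular via the projective $\delta$-cover characterization; if one instead worked from Zhou's original decomposition-style definition, a citation to \cite[Theorem 3.5]{Zh} would be needed at that point, which is in effect what the paper's own proof of $(2)\Leftrightarrow(3)$ supplies. As written within this paper's framework, however, your proof is complete.
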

\begin{proof}  (1) $\Leftrightarrow$ (2)  Clear by Theorem \ref{belli}. \\(2) $\Leftrightarrow$ (3)  By Theorem \ref{guz} (2),   $R$ is principally $\delta$-lifting if and
only if for every principal right ideal $I$ of $R$ can be written as $I = N\oplus S$, where $N$ is direct summand and $S$ is $\delta$-small in $R$.
This is equivalent to being $R$ $\delta$-semiregular since for any ring $R$, Rad$_{\delta}(R)$ is $\delta$-small in $R$ and each submodule of a
$\delta$-small submodule is $\delta$-small.
\end{proof}

The module $M$ is called {\it principally $\delta$-supplemented} if
every cyclic submodule of $M$ has a $\delta$-supplement in $M$.
Clearly, every $\delta$-supplemented module is principally
$\delta$-supplemented. Every
principally $\delta$-lifting module is principally
$\delta$-supplemented.   In a subsequent paper we investigate
principally $\delta$-supplemented modules in detail. Now we prove:

\begin{thm}\label{suppl}  Let $M$ be a principally $\delta$-semiperfect module. Then
\begin{enumerate}
\item  $M$ is principally $\delta$-supplemented.
\item  Each  factor module of $M$ is principally
$\delta$-semiperfect, hence any homomorphic image and any direct
summand of $M$ is principally $\delta$-semiperfect.
\end{enumerate}
\end{thm}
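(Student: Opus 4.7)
The statement splits into a constructive part (1) and a structural part (2), with the ``hence'' clauses being immediate corollaries of the factor-module assertion.

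For (1), I use the standard lifting argument. Given $m\in M$, the hypothesis furnishes a projective $\delta$-cover $f:P\to M/mR$ with $\ker f$ $\delta$-small in $P$. Projectivity of $P$ together with surjectivity of the canonical projection $\pi:M\to M/mR$ yields a lift $g:P\to M$ satisfying $\pi g=f$. Two routine verifications finish the job: $M=mR+g(P)$ because $\pi(g(P))=f(P)=M/mR$, and $mR\cap g(P)=g(\ker f)$ because $g(p)\in mR$ iff $f(p)=\pi g(p)=0$. Since $\ker f$ is $\delta$-small in $P$, Lemma~\ref{ilk}(2) makes $g(\ker f)$ $\delta$-small in $g(P)$, so $g(P)$ is a $\delta$-supplement of $mR$ in $M$, showing that $M$ is principally $\delta$-supplemented.

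For (2), fix $N\le M$ and a cyclic submodule $\bar mR=(mR+N)/N$ of $M/N$; I must produce a projective $\delta$-cover of $(M/N)/\bar mR\cong M/(mR+N)$. The natural construction starts from the projective $\delta$-cover $f:P\to M/mR$ supplied by the hypothesis and composes with the canonical surjection $q:M/mR\to M/(mR+N)$, obtaining a surjection $qf:P\to M/(mR+N)$ from the projective module $P$. The proof then reduces to showing that $\ker(qf)=f^{-1}\bigl((mR+N)/mR\bigr)$ is $\delta$-small in $P$.

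This $\delta$-smallness check is the heart of the argument. The kernel contains the $\delta$-small submodule $\ker f$, with quotient isomorphic to $(mR+N)/mR$; the plan is to combine the $\delta$-smallness of $\ker f$ with Lemma~\ref{ilk} (parts (2) and (4)) and the singular-quotient definition of $\delta$-small, tracing $\delta$-smallness through each piece, invoking part (1) of the present theorem to control how $N$ interacts with $mR$ modulo $\delta$-small pieces. I expect this step --- ensuring that adjoining the potentially non-$\delta$-small piece $(mR+N)/mR$ to $\ker f$ still yields a $\delta$-small submodule of $P$ --- to be the main technical obstacle. Finally, the \emph{hence} clause is immediate: a homomorphic image of $M$ is a factor module $M/\ker\varphi$, and a direct summand $M_{1}$ of $M=M_{1}\oplus M_{2}$ is isomorphic to $M/M_{2}$, so both cases fall under the factor-module statement already established.
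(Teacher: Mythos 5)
Your proof of part (1) is correct and is essentially the paper's proof: lift the cover $f:P\to M/mR$ through the canonical projection $\pi:M\to M/mR$ to get $g:P\to M$, verify $M=mR+g(P)$ and $mR\cap g(P)=g(\ker f)$, and conclude by Lemma~\ref{ilk}(2).

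Part (2), however, is not a proof but an outline with the central step missing, and you say so yourself: ``I expect this step \dots\ to be the main technical obstacle.'' The reduction you set up --- $qf:P\to M/(mR+N)$ with $\ker(qf)=f^{-1}\bigl((mR+N)/mR\bigr)$, and the goal of showing this kernel $\delta$-small in $P$ --- is the right target (it is the same map and same kernel the paper works with, written multiplicatively as $hg=\pi\alpha$). But the mechanism you propose for proving the $\delta$-smallness does not exist as stated. The submodule $(mR+N)/mR$ of $M/mR$ is not in general $\delta$-small, and $\delta$-smallness is not inherited by preimages under a surjection, so ``tracing $\delta$-smallness through each piece'' via Lemma~\ref{ilk}(2),(4) cannot by itself turn the $\delta$-small $\ker f$ plus the arbitrary piece $(mR+N)/mR$ into a $\delta$-small $\ker(qf)$; nor does part (1) of the theorem give you control of $(mR+N)/mR$. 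What the paper actually does at this point is apply projectivity of $P$ a \emph{second} time: it lifts the composite $hg:P\to N/nR$ through the projection $\pi:N\to N/nR$ to obtain a map $\alpha:P\to N$ with $\pi\alpha=hg$, and then argues with the identity $nR\cap\alpha(P)=\alpha(\ker g)$ and Lemma~\ref{ilk}(2). That auxiliary lift $\alpha$ into $N$ (not just into $N/nR$) is the ingredient absent from your sketch, and without it the $\delta$-smallness claim is unsupported. The closing ``hence'' remarks about homomorphic images and direct summands are fine and match the paper, but they rest on the factor-module claim, which you have not established.
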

\begin{proof} (1) Let $m\in M$. Then $M/mR$ has a projective $\delta$-cover $P\stackrel{\beta}\rightarrow M/mR$. There exists
$P \stackrel{\alpha}\rightarrow M$ such that the following diagram is
commutative, $\beta =\pi \alpha$,  \linebreak where $M\stackrel{\pi}\rightarrow
M/mR$ is the natural epimorphism.

\[\begin{diagram}
\node[2]{P}\arrow{sw,t,..}{\alpha}\arrow{s,r}\beta \\
\node{M}\arrow{e,t}{\pi}\node{M/mR}\arrow{e}\node{0}
\end{diagram}\]

\noindent Then $M = \alpha(P) + mR$, and $\alpha(P)\cap mR$ is
$\delta$-small in $\alpha(P)$, by Lemma \ref{ilk} (1).
Hence $M$ is principally $\delta$-supplemented.\\
(2) Let $M \stackrel{f}\rightarrow N$ be an epimorphism and $nR$
a cyclic submodule of $N$.  Let \linebreak $m\in f^{-1}(nR)$ and $P
 \stackrel{g}\rightarrow M/(mR)$ be a projective $\delta$-cover. Define
$M/(mR) \stackrel{h} \rightarrow N/nR$ by $h(m' + mR) = f(m') + nR$,  where
 $m' + mR\in M/(mR)$. Then Ker$(g)$ is contained in Ker$(hg)$.
 By projectivity of $P$,  there is a map $\alpha$ from $P$ to $N$ such that $hg = \pi\alpha$.
\[\begin{diagram}
\node{P}\arrow{e,t}{g}\arrow{s,r,..}{\alpha}\node{M/mR}\arrow{s,r}{h}\\
\node{N}\arrow{e,t}{\pi}\node{N/nR}\arrow{e}\node{0}
\end{diagram}\]
It is routine to check that $(nR)\cap \alpha(P) =
\alpha($Ker$(g))$. By Lemma \ref{ilk} (2),  $\alpha(Ker(g))$ is
$\delta$-small in $N$ since Ker$(g)$ is $\delta$-small. Let
$x\in$Ker$(\pi\alpha)$. Then $hg(x) = (\pi\alpha)(x) = 0$ or
$\alpha(x)\in (nR)\cap \alpha(P)$. So Ker$(\pi\alpha)$ is
$\delta$-small. Hence $P$ is a projective $\delta$-cover for
$N/(nR)$.
\end{proof}

\begin{thm} Let $P$ be a projective module with {\rm Rad}$_{\delta} (P)$ is $\delta$-small in $P$. Then the following are
equivalent.
\begin{enumerate}
\item $P$  is principally $\delta$-lifting.
\item $P / ${\rm Rad}$_\delta (P)$ is principally semisimple and, for
any cyclic submodule $\overline{x}R$ of $P / ${\rm Rad}$_\delta
(P)$ that is a direct summand of $P/ ${\rm Rad}$_\delta (P)$,
there exists a cyclic direct summand $A$ of $P$ such that
$\overline{x}R=\overline{A}$.
\end{enumerate}
\end{thm}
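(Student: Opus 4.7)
My plan is to verify the equivalence via Theorem \ref{guz}, leaning on Lemma \ref{ss}, Lemma \ref{kes}, and Lemma \ref{ilk}. The direction (1)$\Rightarrow$(2) is relatively direct; the reverse direction carries the main argument.

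For (1)$\Rightarrow$(2), Lemma \ref{ss} immediately gives that $P/\mathrm{Rad}_\delta(P)$ is principally semisimple. Given a cyclic direct summand $\overline{x}R$ of $\overline{P}$, I would pick any preimage $x\in P$ and apply Theorem \ref{guz}(2) to the cyclic submodule $xR$, obtaining $xR=N\oplus S$ with $N$ a direct summand of $P$ and $S$ a $\delta$-small submodule of $P$. Then $N$ is cyclic as a direct summand of the cyclic module $xR$, while $S\leq\mathrm{Rad}_\delta(P)$ forces $\overline{N}=\overline{xR}=\overline{x}R$, so $A:=N$ is the required cyclic direct summand of $P$.

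For (2)$\Rightarrow$(1), given $x\in P$ I will construct a decomposition $P=N\oplus T$ with $N\leq xR$ and $xR\cap T$ a $\delta$-small submodule of $T$. By principal semisimplicity $\overline{x}R$ is a direct summand of $\overline{P}$, so the hypothesis supplies a cyclic direct summand $A=a_0R$ of $P$ with $\overline{A}=\overline{x}R$. Fix a complement $P=A\oplus A'$. Since $\mathrm{Rad}_\delta(P)$ is fully invariant by Lemma \ref{ilk}(2), Lemma \ref{kes} yields $\mathrm{Rad}_\delta(P)=\mathrm{Rad}_\delta(A)\oplus\mathrm{Rad}_\delta(A')$; combining this with the $\delta$-smallness hypothesis and Lemma \ref{ilk}(3) gives $\overline{P}=\overline{A}\oplus\overline{A'}$ and forces each of $\mathrm{Rad}_\delta(A)$, $\mathrm{Rad}_\delta(A')$ to be $\delta$-small in its factor. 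Writing $x=a+a'$ with $a\in A$, $a'\in A'$, the relation $\overline{x}\in\overline{A}$ gives $\overline{a}=\overline{x}$ and $a'\in\mathrm{Rad}_\delta(A')$, while $\overline{a}R=\overline{A}$ translates to $aR+\mathrm{Rad}_\delta(A)=A$. Lemma \ref{ilk}(1) produces a decomposition $A=aR\oplus Y$ with $Y\leq\mathrm{Rad}_\delta(A)$ projective semisimple, and consequently $P=aR\oplus T$ for $T:=Y\oplus A'$. The projection $\pi_{aR}\colon P\to aR$ sends $xr$ to $ar$, so $\pi_{aR}(xR)=aR$; projectivity of $aR$ (a direct summand of the projective $P$) furnishes a section $\psi\colon aR\to xR$, and $N:=\psi(aR)\leq xR$ satisfies $P=N\oplus T$ since $\pi_{aR}|_{N}\colon N\to aR$ is an isomorphism. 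A direct computation identifies $xR\cap T$ with $a'\cdot\mathrm{Ann}_R(a)\leq a'R\leq\mathrm{Rad}_\delta(A')$, which is $\delta$-small in $T$ by Lemma \ref{ilk}(2) and (4), completing the verification.

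The main obstacle I anticipate is that the lifted summand $A$ from hypothesis (2) need not lie inside $xR$, so one cannot use $A$ itself as the required direct summand of $P$ contained in $xR$. The crux is therefore a double use of Lemma \ref{ilk}(1): first to extract the genuinely relevant cyclic direct summand $aR$ of $A$ by absorbing the spurious piece $Y\leq\mathrm{Rad}_\delta(A)$, and then to pull back a section of the surjection $\pi_{aR}\colon xR\twoheadrightarrow aR$ into $xR$ via the projectivity of $aR$. Every step rests on the fully invariant decomposition of $\mathrm{Rad}_\delta(P)$ across $A\oplus A'$ furnished by Lemma \ref{kes}, which is where the hypothesis that $\mathrm{Rad}_\delta(P)$ is $\delta$-small in $P$ is crucially used.
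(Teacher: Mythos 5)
Your direction (1)$\Rightarrow$(2) coincides with the paper's: Lemma \ref{ss} for principal semisimplicity of $P/\mathrm{Rad}_\delta(P)$, then Theorem \ref{guz}(2) to write $xR = N\oplus S$ with $N$ a direct summand and $S\leq\mathrm{Rad}_\delta(P)$, giving $\overline{N} = \overline{x}R$. Your direction (2)$\Rightarrow$(1) is correct but follows a genuinely different route from the paper. The paper fixes a complement $B$ of the given cyclic summand $A$ in $P$, sets $U := B + \mathrm{Rad}_\delta(P)$, observes $P = (A+U)\oplus Y$ via Lemma \ref{ilk}(1), and then invokes Lemma \ref{proj} on the decomposition $P=A\oplus B$ to extract $V\leq A$ with $P=V\oplus B$, concluding by showing $(xR)\cap(B\oplus Y)$ is $\delta$-small. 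You instead decompose $\mathrm{Rad}_\delta(P)$ across $A\oplus A'$ by full invariance (Lemma \ref{kes} together with Lemma \ref{ilk}(2),(3)), isolate the $A$-component $a$ of $x$ so that $\overline{a}R=\overline{A}$ and Lemma \ref{ilk}(1) applied inside $A$ yields $A = aR\oplus Y$, and then split the surjection $\pi_{aR}\colon xR\twoheadrightarrow aR$ using projectivity of the direct summand $aR$ to pull back $N:=\psi(aR)$ into $xR$ with $P=N\oplus T$ and $xR\cap T = a'\cdot\mathrm{Ann}_R(a)\leq\mathrm{Rad}_\delta(A')$ $\delta$-small in $T$. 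Your route buys a concrete clarity advantage: it explicitly produces a summand $N$ contained \emph{inside} $xR$, which is precisely what the definition of principally $\delta$-lifting demands, whereas the paper's argument lands on a summand $V\leq A$ and, since $A$ need not be contained in $xR$ (only $\overline{A}=\overline{x}R$), the passage to the required inclusion $V\leq xR$ is left implicit there; your section-via-projectivity step makes this completely explicit.
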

\begin{proof} (1)$\Rightarrow$ (2)  Since  $P$ is a principally $\delta$-lifting module,   $P / ${\rm Rad}$_\delta (P)$ is principally semisimple by
 Lemma \ref{ss}.  Let $\overline{x}R$ be any cyclic submodule of
$P / $ {\rm Rad}$_\delta (P)$.  By Theorem \ref{guz},  there
exists a direct summand $A$ of $P$ and a $\delta$-small submodule
$B$ such that $xR = A\oplus B$. Since $B$ is contained in {\rm
Rad}$_{\delta}(R)$, $xR + $ {\rm Rad}$_{\delta}(R) = A + $ {\rm
Rad}$_{\delta}(R)$. Hence $\overline{x}R = \overline{A}$.

\noindent (2)$\Rightarrow$ (1) Let $xR$ be any cyclic submodule of
$P$. Then we have $P / $ {\rm Rad}$_\delta (P) = [(xR + ${\rm
Rad}$_\delta (P))/ ${\rm Rad}$_\delta (P)] \oplus [U/ $ {\rm
Rad}$_\delta (P)]$ for some $U\leq P$. By (2),  there exists a
direct summand $A$ of $P$ such that $P = A\oplus B$ and $U = B + $
{\rm Rad}$_\delta (P)$. Then $P = A \oplus B = A + U + $ {\rm
Rad}$_\delta (P)$. Since {\rm Rad}$_\delta (P)$ is $\delta$-small
in $P$, there exists a projective and semisimple submodule $Y$ of
$P$ such that $P = A \oplus B = (A + U)\oplus Y$. Since $P$ is
projective, $A + B$ is also projective and so by Lemma \ref{proj},
we have $A + B = V\oplus B$ for some $V\leq A$. Hence $P = V\oplus
B \oplus Y$. On the other hand $(xR)\cap (B \oplus Y) = (xR)\cap B
\leq (xR)\cap U \leq$ {\rm Rad}$_{\delta}(R)$. Since {\rm
Rad}$_{\delta}(R)$ is $\delta$-small in $P$, it is  $\delta$-small
in $B \oplus Y$ by Lemma \ref{ilk} (3). Thus $P$ is principally
$\delta$-lifting.
\end{proof}

\end{document}